\numberwithin{equation}{section}
\newtheorem{lemma}{Lemma}[section]
\newtheorem{proposition}[lemma]{Proposition}
\newtheorem{theorem}[lemma]{Theorem}
\numberwithin{equation}{section}
\begin{document}
\title[Geodesic Connecting Lagrangian Graphs]
{On the existence of geodesic connecting Lagrangian graphs in $\mathbb{C}^n$}
\author{Yiyan Xu}
\date{\today}
\address{Yiyan Xu, The Department of Mathematics, Nanjing
University, Nanjing, China, 210093}
\email{xuyiyan@nju.edu.cn}
%

\begin{abstract}
In this paper we show that two Lagrangian graphs over the torus in $\mathbb{C}^n$ with large Lagrangian phase can be connected via Lipschitz continuous geodesic with respect to the $L^2$ metric on the space of Lagrangian submanifolds. In particular, the geodesic for Lagrangian graphs over the torus in $\mathbb{C}^n$ can be formulated as a degenerate elliptic equation, and we construct geodesic by solving the corresponding Dirichlet problem.
\end{abstract}
\maketitle

\section{Introduction}
One of the important open problems regarding the geometry of Calabi-Yau manifolds consists in determining when a given Lagrangian admits a minimal Lagrangian (special Lagrangian submanifold) in its homology class or Hamiltonian class. The stability of an exact isotopy class should be related to the existence of a special Lagrangian representative, see \cite[Conjecture 5.2]{Thomas01} and \cite[Conjecture 7.3]{TY02}. Analogue to the study  Einstein-Hermitian metrics on holomorphic vector bundles and K\"{a}hler Einstein metrics on Fano manifolds, Solomon designed a possible program concerning the existence of special Lagrangian submanifolds \cite{Sol13,Sol14}.

To carry out the program, as noted in \cite{Sol14}, it is desirable to develop a satisfactory existence theory for geodesics on the space of Lagrangian submanifolds.
For the Hamiltonian isotopy class of $O(n)$-invariant Lagrangian sphere in Milnor fiber, Solomon and Yuval \cite{SY15}  constructed geodesics by using $O(n)$ symmetry to reduce the problem to ODE. In a resent work,
Rubinstein and Solomon \cite{RS15} studied the existence problem of geodesics for positive Lagrangian graphs over bounded domain in $\mathbb{C}^n$, where they used and extended the  Dirichlet duality theory for elliptic operator that developed by Harvy-Lawson \cite{HL09}. In this paper, we also concern the existence of geodesics for Lagrangian graphs. In particular, we construct Lipschitz continuous geodesics for Lagrangian graphs over the torus in $\mathbb{C}^n$ via degenerate elliptic PDE technique.



Let $\Lambda$ be a graph in $\mathbb{C}^n$ over $\mathbb{T}^n=\mathbb{R}^n/\mathbb{Z}^n$, i.e. $\Lambda$ is the image of some embedding $F: \mathbb{T}^n\rightarrow \mathbb{C}^n$, $F(x)=x+\sqrt{-1}f(x)$; here $f$ can be regarded as a periodic function that is defined over the real factor $\mathbb{R}^n$, thus we may say the lagrangian graph over $\mathbb{T}^n$ is embedded in $\mathbb{C}^n$. Note that $\Lambda$ is Lagrangian if and only if the $1$-form $f_idx_i$ is closed. Consequently, there exists (in general only locally
defined) function $u$ such that $f(x)=\nabla u(x)$, see \cite{HL82}. In this case, $u$ is called the potential function of $\Lambda$.

An exact path of  lagrangian graph $\Lambda_t=(x,\nabla u(t,x))$ is a geodesic in the space of Lagrangians with respect to the $L^2$ metric that defined with the  holomorphic $n$-form $\Omega=e^{-\sqrt{-1}\theta}dz_1\wedge \cdots \wedge dz_n$, $\theta\in (-\pi, \pi]$, if the potential function $u$ satisfies the Lagrangian geodesic equation, see Proposition \ref{Lga-Geo_Equ_Pro_1},
\begin{equation}\label{LagGeoEqu_1}
{\rm Im}\Big(e^{-\sqrt{-1}\theta}\det\begin{bmatrix}
 \sqrt{-1}\frac{\partial^2u}{\partial t^2}&\sqrt{-1}\nabla\frac{\partial u}{\partial t}\\ (\sqrt{-1}\nabla\frac{\partial u}{\partial t})^T&I+\sqrt{-1}\nabla^2u
\end{bmatrix}\Big)=0.
\end{equation}

Suppose $\Lambda_i=(x,\nabla u_i(x))$ , $i=0, 1$, are two exact Lagrangian graph, then the existence of Lagrangian geodesic (graph) connecting $\Lambda_0$ and $\Lambda_1$ is equivalent to solve \eqref{LagGeoEqu_1} with boundary data
\[
\nabla u|_{t=0}=\nabla u_0,~~~\nabla u|_{t=1}=\nabla u_1.\]

 To construct solution of  equation \eqref{LagGeoEqu_1}, we try to approximate  the equation by a family of elliptic equation
\begin{equation}\label{LagGeoEquApp_1}
{\rm Im}\Big(e^{-\sqrt{-1}\theta}\det\begin{bmatrix}
 \tau+\sqrt{-1}\frac{\partial^2u}{\partial t^2}&\sqrt{-1}\nabla\frac{\partial u}{\partial t}\\ (\sqrt{-1}\nabla\frac{\partial u}{\partial t})^T&I+\sqrt{-1}\nabla^2u
\end{bmatrix}\Big)=0.
\end{equation}
As $\tau\rightarrow 0$, we recover the Lagranigan geodesic equation \eqref{LagGeoEqu_1} from \eqref{LagGeoEquApp_1}.
Now if we  scale the time variable, namely, by introducing the new variable
\[s=\sqrt{\tau}t\in [0,\sqrt{\tau}],\]
then
\eqref{LagGeoEquApp_1} is equivalent to
\begin{equation}\label{SpeLagGeoEquApp_1}
{\rm Im}\big(e^{-\sqrt{-1}\theta}\det(I+\sqrt{-1}D^2u)\big)=0,
\end{equation}
or
\begin{equation}\label{SpeLagGeoEquApp_2}
\sum_{i=1}^{n+1}\arctan\lambda_i(D^2u)=\Theta=k\pi+\theta,~~~ {\rm for~some }~k\in \mathbb{Z},
\end{equation}
here we denote $D=(\frac{\partial}{\partial s}, \nabla)$, and $\lambda_i(D^2 u)$ to be the $i-$th eigenvalue of $D^2 u$.
In particular, the parameter $\tau$ disappear, and \eqref{SpeLagGeoEquApp_1} and \eqref{SpeLagGeoEquApp_2} are called the special Lagrangian equation \cite{HL82}. Harvey-Lawson\cite{HL82} showed that the special Lagrangian equation \eqref{SpeLagGeoEquApp_1} is elliptic at every solution.  Consequently, the Lagrangian geodesic equation \eqref{LagGeoEqu_1} is degenerate elliptic, see also \cite{RS15}.
The Dirichlet problem for equation \eqref{SpeLagGeoEquApp_1} on bounded domain $\Omega\subset \mathbb{R}^{n+1}$ was treated in \cite{CNS85} for the case where $D^2 u$ is required to lie on one of the two outermost branches, because of the concavity requirement. It is equivalent to require $\Theta\in[\frac{n-1}{2}\pi,\frac{n+1}{2}\pi)$ in \eqref{SpeLagGeoEquApp_2}, see \cite{CNS85, Yuan06}.

Let $\Lambda_i=(x,\nabla u_i(x))$ , $i=0, 1$, be two Lagrangian graph over $\mathbb{T}^n$, which satisfy $[du_0]=[du_1]\in H^1(\mathbb{T}^n)$. In particular, we can assume that $u_1-u_0$ is a globally defined function on $\mathbb{T}^n$. Denote
\begin{equation}
\vartheta(\Lambda_i)=\vartheta(\nabla^2u_i)=\sum_{i=1}^{n}\arctan\lambda_i(\nabla^2u_i),
\end{equation}
to be the Lagrangian phase for $\Lambda_i$ , $i=0, 1$, see \eqref{Lag-phase-fun-def}. Moreover, we set
\begin{equation}\label{Lin-Lag-Pot-1}
\tilde{u}^\tau(s,x)=(1-\frac{s}{\sqrt{\tau}})u_0+\frac{s}{\sqrt{\tau}}u_1=u_0+\frac{s}{\sqrt{\tau}}(u_1-u_0),
\end{equation}
 and
\begin{equation}\label{Hes-Lin-Lag-1}
\begin{split}
\chi^\tau&= D^2\tilde{u}^\tau=\begin{bmatrix}
 0 &\frac{1}{\sqrt{\tau}}\nabla (u_1-u_0)\\
  \frac{1}{\sqrt{\tau}}\nabla (u_1-u_0)^T&(1-\frac{s}{\sqrt{\tau}})\nabla^2u_0+\frac{s}{\sqrt{\tau}}\nabla^2u_1
\end{bmatrix},
\end{split}
\end{equation}
for convenience.
For each $0< \tau \leq 1$, we first solve the Dirichlet problem for the special Lagrangian equation \eqref{SpeLagGeoEquApp_2} over the cylinder $[0,\sqrt{\tau}]\times \mathbb{T}^n$ via the continuity method.

\begin{theorem}\label{SLE-Cyl-Exi-Thm}
For any fixed parameter $0< \tau \leq 1$, let $\chi^\tau$ be defined in \eqref{Hes-Lin-Lag-1}. Given constant $\Theta\in[\frac{n-1}{2}\pi,\frac{n+1}{2}\pi)$, assume
\begin{equation}\label{Adm-Bou-Con-1}
\vartheta(\nabla^2u_i)>\Theta-\frac{\pi}{2},~~i=0,1,
\end{equation}
there exists a smooth solution $v^\tau(s,x): [0,\sqrt{\tau}]\times \mathbb{T}^n\rightarrow \mathbb{R}$ to the Dirichlet problem of special Lagrangian equation
\begin{equation}\label{SLE1}
f(\lambda(\chi^\tau+D^2v^\tau))=\sum_{i=1}^{n+1}\arctan\lambda_i(\chi^\tau+D^2v^\tau)=\Theta,~~~\hbox{in}~~ [0,\sqrt{\tau}]\times \mathbb{T}^n
\end{equation}
with zero boundary data
\begin{equation}\label{SpeLagDirProTauBou1}
v^\tau|_{s=0}=0,~~~v^\tau|_{s=\sqrt{\tau}}=0.
\end{equation}
\end{theorem}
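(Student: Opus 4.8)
The plan is to solve \eqref{SLE1}--\eqref{SpeLagDirProTauBou1} by the method of continuity, the decisive input being a global subsolution manufactured from the hypothesis \eqref{Adm-Bou-Con-1}. Since $\tau$ is fixed I suppress it and write $F[v]=f(\lambda(\chi^\tau+D^2v))$. First I would produce a subsolution with the correct boundary values: take $\underline v=\frac{C}{2}(s^2-\sqrt\tau\,s)$, which vanishes at $s=0,\sqrt\tau$ and adds the constant $C$ to the $(s,s)$-entry of the Hessian. As $C\to+\infty$ the eigenvalues of $\chi^\tau+D^2\underline v$ split into one eigenvalue tending to $+\infty$ and $n$ eigenvalues converging to those of the tangential block $A_s=(1-\tfrac{s}{\sqrt\tau})\nabla^2u_0+\tfrac{s}{\sqrt\tau}\nabla^2u_1$, so that $F[\underline v]\to\tfrac\pi2+\vartheta(A_s)$. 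Because $\Theta\geq\frac{n-1}{2}\pi$ forces $\Theta-\frac\pi2\geq\frac{n-2}{2}\pi$, the hypothesis \eqref{Adm-Bou-Con-1} places $\nabla^2u_0$ and $\nabla^2u_1$ strictly inside the convex supercritical set $\{\vartheta\geq\Theta-\frac\pi2\}$; convexity of that set \cite{Yuan06} keeps the whole segment $A_s$ inside it, whence $\vartheta(A_s)>\Theta-\frac\pi2$ and $F[\underline v]>\Theta$ for $C$ large. I would then run the continuity family
\[
F[v^t]=(1-t)\,F[\underline v]+t\,\Theta,\qquad v^t|_{s=0}=v^t|_{s=\sqrt\tau}=0,\qquad t\in[0,1],
\]
whose right-hand side stays $\geq\Theta\geq\frac{n-1}{2}\pi$, so every solution remains on the convex (supercritical) branch, and $v^0=\underline v$ solves the case $t=0$.

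Openness of the set $S\subset[0,1]$ of solvable parameters is routine: $\partial f/\partial\lambda_i=(1+\lambda_i^2)^{-1}>0$ makes the linearization $f^{ij}\partial_{ij}$ uniformly elliptic at any admissible solution, and since it carries no zeroth-order term the maximum principle makes it invertible on functions with zero boundary data, so the implicit function theorem in $C^{2,\alpha}([0,\sqrt\tau]\times\mathbb{T}^n)$ applies.

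All the real work lies in the a priori estimates that give closedness. The zeroth-order bound comes from comparison against $\underline v$ from below and a supersolution of the form $\overline v=\frac{C'}{2}(\sqrt\tau\,s-s^2)$ from above, using the comparison principle for the concave operator. For the $C^1$ bound the torus directions cause no trouble, since periodicity lets me differentiate \eqref{SLE1} and apply the maximum principle with no boundary contribution, while the normal derivative on $\{s=0,\sqrt\tau\}$ is pinned between $\underline v$ and a linear barrier. The heart of the matter is the $C^2$ bound: in the interior and in all tangential torus directions I would differentiate \eqref{SLE1} twice, exploit the concavity of $F$ (guaranteed by $\Theta\geq\frac{n-1}{2}\pi$) together with the uniform ellipticity, and bound $\max|D^2v^t|$ by the maximum principle. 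The genuinely new feature, compared with the bounded-domain treatment of \cite{CNS85}, is that the only boundary is the pair of tori $\{s=0\}$ and $\{s=\sqrt\tau\}$, on which the tangential--tangential second derivatives are already fixed, equalling $\nabla^2u_i$ because $v^t$ vanishes identically along each torus.

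I expect the boundary $C^2$ estimate on these two tori to be the main obstacle, and this is exactly where \eqref{Adm-Bou-Con-1} enters a second time: the subsolution $\underline v$ serves as the barrier controlling the mixed tangential--normal derivatives, after which the double normal derivative $\partial_{ss}v^t$ is read off from the equation \eqref{SLE1} using uniform ellipticity in the $s$-direction. Once $\|v^t\|_{C^2}$ is bounded the operator is uniformly elliptic and concave, so the Evans--Krylov theorem upgrades the estimate to a uniform $C^{2,\alpha}$ bound and Schauder theory bootstraps to uniform $C^\infty$ bounds. These estimates make $S$ closed; being nonempty, open and closed, $S=[0,1]$, and the value $t=1$ furnishes the desired solution $v^\tau$.
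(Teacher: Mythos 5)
Your proposal is correct and follows essentially the same route as the paper: the same quadratic-in-$s$ subsolution $\underline v$ whose large Hessian entry forces $F[\underline v]>\Theta$ (the paper justifies $\vartheta(A_s)>\Theta-\frac{\pi}{2}$ via concavity/level-set convexity, exactly your appeal to \cite{Yuan06}, and via the eigenvalue-asymptotics lemma of \cite{CNS85}), the same continuity path $(1-t)F[\underline v]+t\Theta$ with zero boundary data, the same barrier arguments using $\underline v$ and a supersolution $\overline v$ for the $C^0$, $C^1$, and mixed boundary $C^2$ estimates, the same observation that tangential second derivatives vanish on the boundary tori with the double normal derivative recovered from the equation, and the same Evans--Krylov plus Schauder bootstrap for closedness.
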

Moreover, for the sequence of solution \[\hat{v}^\tau(t,x):=v^\tau(\sqrt{\tau}t,x),\quad t\in [0,1],\] for \eqref{SLE1},  we have the uniform estimate,
\begin{equation}
|\hat{v}^\tau(t,x)|_{C^{1}([0,1]\times \mathbb{T}^n)}\leq C,
\end{equation}
where $C$  depends only on $u_0$ and $u_1$, not on the parameter $\tau$.   Let $\tau$ go to zero, with the continuity of the operator
\begin{equation}\label{SpeLagGeoOpe1}
{\rm Im}\Big(e^{-\sqrt{-1}\theta}\det\begin{bmatrix}
 \tau+\sqrt{-1}\frac{\partial^2u}{\partial t^2}&\sqrt{-1}\nabla\frac{\partial u}{\partial t}\\ (\sqrt{-1}\nabla\frac{\partial u}{\partial t})^T&I+\sqrt{-1}\nabla^2u
\end{bmatrix}\Big)
\end{equation}
in the topology of uniform convergence for convex functions\cite{TW97}, the limit \[u(t,x):=\tilde{u}^1(t,x)+\lim_{\tau\rightarrow 0}\hat{v}^\tau(t,x)\] will be the potential for the Lagrangian geodesic path.
\begin{theorem}\label{LGE-Exi-Thm}
Let $\Lambda_i=(x,\nabla u_i(x))$ , $i=0, 1$, be two Lagrangian graph over $\mathbb{T}^n$, which satisfy $[du_0]=[du_1]\in H^1(\mathbb{T}^n)$. Assume the Lagrangian phase satisfies $\vartheta(\Lambda_i)>\frac{n-1}{2}\pi$, then  $\Lambda_0$ and $\Lambda_1$ can be connected by a weak geodesic $\Lambda_t=(x,\nabla u(t,x))$
on the space of positive Lagrangian submanifolds with respect to  the  holomorphic $n$-form $\Omega=e^{-\sqrt{-1}\theta}dz_1\wedge \cdots \wedge dz_n$ for some $\theta\in (-\pi, \pi]$, see \eqref{Pre-Lag-Phase-1}. Moreover, the potential function is Lipschitz continuous solution of \eqref{LagGeoEqu_1}, i.e. $u(t,x)\in C^{0,1}([0,1]\times \mathbb{T}^n)$. The same result also holds for $\vartheta(\Lambda_i)<-\frac{n-1}{2}\pi, i=0, 1$.

\end{theorem}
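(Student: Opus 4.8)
The plan is to produce the weak geodesic as the limit, as $\tau\to 0$, of the rescaled solutions furnished by Theorem~\ref{SLE-Cyl-Exi-Thm}; the only essentially new analytic ingredient is a gradient bound that is uniform in $\tau$.

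First I would fix the phase. Because $\vartheta(\Lambda_i)>\frac{n-1}{2}\pi$ for $i=0,1$, the open interval $\left(\tfrac{n-1}{2}\pi,\ \min\{\tfrac{n+1}{2}\pi,\ \vartheta(\Lambda_0)+\tfrac{\pi}{2},\ \vartheta(\Lambda_1)+\tfrac{\pi}{2}\}\right)$ is nonempty, and I choose $\Theta$ in it, so that simultaneously $\Theta\in[\frac{n-1}{2}\pi,\frac{n+1}{2}\pi)$ and the admissibility condition \eqref{Adm-Bou-Con-1} holds. Writing $\Theta=k\pi+\theta$ with $\theta\in(-\pi,\pi]$ fixes the holomorphic form $\Omega=e^{-\sqrt{-1}\theta}dz_1\wedge\cdots\wedge dz_n$. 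The hypothesis $[du_0]=[du_1]$ guarantees that $u_1-u_0$, and hence $\tilde u^\tau$ in \eqref{Lin-Lag-Pot-1} together with the solutions $v^\tau$, are globally defined on $\mathbb{T}^n$. For each $0<\tau\le 1$, Theorem~\ref{SLE-Cyl-Exi-Thm} yields a smooth $v^\tau$ solving \eqref{SLE1}--\eqref{SpeLagDirProTauBou1}, and after the time scaling $s=\sqrt{\tau}\,t$ the function $\hat U^\tau(t,x):=\tilde u^1(t,x)+\hat v^\tau(t,x)$, with $\hat v^\tau(t,x)=v^\tau(\sqrt{\tau}t,x)$, solves the regularized geodesic equation \eqref{LagGeoEquApp_1} on $[0,1]\times\mathbb{T}^n$. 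Since $\Theta>\frac{n-1}{2}\pi$ strictly, every solution of \eqref{SLE1} obeys the one-sided eigenvalue bound $\lambda_i(\chi^\tau+D^2v^\tau)\ge\tan(\Theta-\frac{n}{2}\pi)$, so $\hat U^\tau$ is uniformly semiconvex; this Hessian bound is preserved under the affine rescaling in $s$ (a congruence by $\mathrm{diag}(\sqrt\tau,I)$ with $\tau\le1$) and under uniform limits, which is exactly what makes the operator continuity of \cite{TW97} applicable at the end.

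The crux is the estimate $|\hat v^\tau|_{C^1([0,1]\times\mathbb{T}^n)}\le C$ with $C=C(u_0,u_1)$ independent of $\tau$, as announced after Theorem~\ref{SLE-Cyl-Exi-Thm}. I would prove it in three steps. \textbf{(i)} For the $C^0$ bound I compare $\tilde u^\tau+v^\tau$ with barriers built from $u_0,u_1$ and the phase $\Theta$: on the outermost branch the operator $f(\lambda(\cdot))$ obeys the comparison principle, and since $v^\tau$ vanishes on $\{s=0\}\cup\{s=\sqrt\tau\}$ this confines $\hat v^\tau$ to an interval of length independent of $\tau$. \textbf{(ii)} The spatial gradient $\nabla_x\hat v^\tau=\nabla_x v^\tau$ vanishes on the two boundary faces; I would bound it in the interior by differentiating \eqref{SLE1} in $x_k$ and running the maximum principle, concavity of $f$ on the branch preventing $|\nabla_x v^\tau|$ from exceeding its boundary value beyond a controlled constant. \textbf{(iii)} The delicate quantity, and the main obstacle, is the time derivative $\partial_t\hat v^\tau=\sqrt{\tau}\,\partial_s v^\tau$, i.e.\ the geodesic speed: one must show $|\partial_s v^\tau|\le C/\sqrt{\tau}$, so that the blow-up of the off-diagonal block $\frac{1}{\sqrt{\tau}}\nabla(u_1-u_0)$ of $\chi^\tau$ in \eqref{Hes-Lin-Lag-1} is exactly absorbed rather than amplified. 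I expect this to follow from an $s$-directional boundary gradient estimate, using barriers adapted to the thin cylinder whose normal derivative is of the order $1/\sqrt{\tau}$ dictated by the $1/\sqrt{\tau}$ entries of $\chi^\tau$, propagated inward by the maximum principle applied to $\partial_s$ of \eqref{SLE1}; the uniform semiconvexity established above keeps this estimate from degenerating as $\tau\to 0$.

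Granting the uniform $C^1$ bound, the Arzel\`a--Ascoli theorem extracts a subsequence with $\hat v^{\tau_j}\to w$ uniformly on $[0,1]\times\mathbb{T}^n$, where $w\in C^{0,1}$ and $w|_{t=0}=w|_{t=1}=0$; correspondingly $\hat U^{\tau_j}\to u:=\tilde u^1+w$ uniformly, with $u$ semiconvex and Lipschitz and $u|_{t=0}=u_0$, $u|_{t=1}=u_1$, so that $\nabla u|_{t=0}=\nabla u_0$ and $\nabla u|_{t=1}=\nabla u_1$. To finish I pass to the limit in the equation: each $\hat U^{\tau_j}$ is annihilated by the operator \eqref{SpeLagGeoOpe1} with parameter $\tau_j$, that operator converges as $\tau_j\to 0$ to the degenerate elliptic operator of \eqref{LagGeoEqu_1}, and by the weak continuity of such operators under uniform convergence of (semi)convex functions \cite{TW97} the limit $u$ is a weak solution of \eqref{LagGeoEqu_1}. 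Hence $\Lambda_t=(x,\nabla u(t,x))$ is a Lipschitz weak geodesic joining $\Lambda_0$ to $\Lambda_1$ in the space of positive Lagrangian graphs for $\Omega$, which is the assertion. Finally, the regime $\vartheta(\Lambda_i)<-\frac{n-1}{2}\pi$ reduces to the one just treated under the reflection $u\mapsto -u$, which reverses the sign of every $\arctan\lambda_i$, and hence of the phase, carrying the lower window onto the upper one.
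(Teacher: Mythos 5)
You follow the same route as the paper: thin--cylinder elliptic approximation via Theorem \ref{SLE-Cyl-Exi-Thm}, a $\tau$-independent $C^1$ bound for $\hat v^\tau$, Arzel\`a--Ascoli, and passage to the limit through the Hessian-measure continuity of \cite{TW97}. Your steps (i)--(iii) are the estimates the paper already proves inside the continuity method at $\zeta=1$ --- see \eqref{L-infity-estimate-1}, \eqref{Normal-derivative-estimate-1}, \eqref{Interior-Tan-der-est-1}, \eqref{Interior-Tan-der-est-3}, summarized in \eqref{Fir-Der-Est-t-1} --- and the mechanism for your ``delicate'' step (iii) is exactly the one you guess: at $\zeta=1$ the differentiated equation $L_{v^\tau}D_\xi(\tilde u^\tau+v^\tau)=0$ has constant right-hand side, so $D_\xi(\tilde u^\tau+v^\tau)$ attains its extrema on $\{0,\sqrt\tau\}\times\mathbb{T}^n$, where the quadratic barriers $\underline v^\tau,\bar v^\tau$ give $|\partial_s v^\tau|\le \tfrac{1}{2\sqrt{\tau}}\max\{\underline\lambda^\tau,\bar\lambda^\tau\}$ with $\underline\lambda^\tau,\bar\lambda^\tau=O(1)$; no semiconvexity enters this step.

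The genuine gap is your treatment of $\Theta$ (hence $\theta$) as a free parameter; the paper's rigid choice is load-bearing in two places. First, by \eqref{Pre-Lag-Phase-1} and \eqref{SpeLagGeoEqu-Cri-1} the paper arranges $\Theta=\tfrac n2\pi$ exactly, which forces $D^2u^\tau>0$: if one of the $n+1$ eigenvalues were $\le 0$, the arctangents could not sum to $\tfrac n2\pi$. Your one-sided bound $\lambda_i>\tan(\Theta-\tfrac n2\pi)$ is a \emph{negative} lower bound whenever $\Theta<\tfrac n2\pi$ (which your interval permits), i.e.\ you get only semiconvexity, and Theorem \ref{Hessian-Measure-1} is a statement about $k$-convex functions: as stated it does not give weak continuity of $\sigma_k(D^2\cdot)$ --- in particular of the determinant $\sigma_{n+1}$ --- along uniformly convergent sequences of merely semiconvex functions, so the closing claim of your second paragraph is unjustified. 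It is repairable (write $\sigma_k(D^2u)=\sigma_k\big(D^2(u+\tfrac c2|x|^2)-cI\big)$ and expand in the Hessian measures of the convex function $u+\tfrac c2|x|^2$), but you must either say this or simply take $\Theta=\tfrac n2\pi$, which lies in your interval. Second, writing $\Theta=k\pi+\theta$ with $\theta\in(-\pi,\pi]$ does not fix $\theta$: there are two admissible pairs $(k,\theta)$, differing by $\pi$, and only the one with $k$ even gives ${\rm Re}\,\Omega|_{\Lambda_i}>0$; with $k$ odd the endpoints are not positive Lagrangians and the stated conclusion about the space of positive Lagrangians fails. The table \eqref{Pre-Lag-Phase-1} is precisely the even-$k$ choice. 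Your reflection argument $u\mapsto -u$ for the case $\vartheta(\Lambda_i)<-\tfrac{n-1}2\pi$ is fine.
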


\noindent\textbf{Acknowledgement}: Part of this work was done when the author was supported by CRC Postdoctoral Fellowship at McGill University between 2013-2014. The author would like to thank Professor Pengfei Guan, Professor Gang Tian and also my colleague  Yalong Shi for their useful discussions.

\section{The space of Lagrangians}

\subsection{Differential structure on the space of Lagrangians}
Let $(X,\omega)$ be a $2n$-dimensional symplectic manifold. Let $L$ be a (possibly non-compact) connected $n$-dimensional submanifold of $(X,\omega)$. We denote
\[\widehat{\mathcal{L}}(L,X)=\Big\{\iota\in {\rm Emb}(L,X)\Big|\iota^*\omega=0\Big\}\]
the space of Lagrangian embeddings of $L$ into $X$. If $L$ is non-compact, we impose that all $\iota\in \widehat{\mathcal{L}}(L,X)$ agree with a given $\iota_0\in \widehat{\mathcal{L}}(L,X)$ outside a compact subset. The group of compactly supported diffeomorphisms ${\rm Diff}(L)$ of $L$ acts on $\widehat{\mathcal{L}}(L,X)$ by $\iota\mapsto \iota\circ \phi$ for $\phi \in {\rm Diff}(L)$. Two Lagrangian embeddings $\iota_1, \iota_2\in {\rm Emb}(L,X)$ belongs to the same  ${\rm Diff}(L)$-orbit if and only if they have the same image $\Lambda=\iota_1(L)=\iota_2(L)$. Therefore, the quotient space $\mathcal{L}(L,X):=\widehat{\mathcal{L}}(L,X)/{\rm Diff}(L)$ can be considered as the space of Lagrangian submanifolds of $X$ which are diffeomorphic to $L$.

A path $\{\Lambda_t\}\subset\mathcal{L}(L,X)$ is said to be smooth if there exists a smooth map $[0,1]\times L\rightarrow X: (t,x)\mapsto \iota_t(x)$ such that $\iota_t(L)=\Lambda_t$ for all $t\in [0,1]$. This $\{\iota_t\}$ is called a lift of $\{\Lambda_t\}$.

Next we explain that one can think of $\mathcal{L}(L,X)$ as an infinite dimensional manifold \cite{AS01,IO07}. Let $[0,1]\times L\rightarrow X:(t,x)\mapsto \iota_t(x)$ be a smooth map such that $\iota_t\in \widehat{\mathcal{L}}(L,X)$ for all $t\in [0,1]$. Let us introduce a one-form on $L$ defined by
\begin{equation}\label{Tangent-Vector-PreLagrange-Space}
\alpha_t:=\omega\Big(\frac{d}{dt}\iota_t, d\iota_t\cdot\Big)\in \Omega^1(L),
\end{equation}
with Cartan's formula, we have
\[0=\frac{\partial}{\partial t}(\iota_t^*\omega)=d\alpha_t,\]
and hence the tangent space of $\widehat{\mathcal{L}}(L,X)$ at $\iota$ is given by
\[T_\iota\widehat{\mathcal{L}}(L,X)=\{v\in C^\infty(L,\iota^*TX)~|~\omega (v, d\iota\cdot)\in \Omega^1(L): {\rm closed}\}.\]
The tangent space to the ${\rm Diff}(L)$-orbit at $\iota$ is described as
\[T_\iota(\iota\cdot {\rm Diff}(L))=\{v=d\iota\circ \xi|\xi\in \mathfrak{X}(L)\},\]
where $\mathfrak{X}(L)$ denotes the space of vector fields on $L$. The map $v\mapsto \omega (v,d\iota\cdot)$ induces a linear map
\[T_\iota\widehat{\mathcal{L}}(L,X)/T_\iota(\iota\cdot {\rm Diff}(L))\rightarrow \{\beta\in \Omega^1(L)|d\beta=0\},\]
which is an isomorphism since $\iota: L\longrightarrow X$ is Lagranigan.

Let $[0,1]\longrightarrow \mathcal{L}(L,X): t\mapsto \Lambda_t$ be a smooth path of Lagrangian submanifolds. We define the velocity vector of the path $\{\Lambda_t\}$ at time  $t$ by
\[\frac{d}{dt}\Lambda_t:={\iota_t}_*\alpha_t,\]
where $\{\iota_t\}$ is a lift of $\{\Lambda_t\}$ and $\alpha_t$ is the one-from defined by \eqref{Tangent-Vector-PreLagrange-Space}. Moreover, $\beta_t:={\iota_t}_*\alpha_t \in \Omega^1(\Lambda_t)$ is closed and independent of the choice of the lift $\{\iota_t\}$.

\begin{lemma}
 The tangent space of $\mathcal{L}(L,X)$ at $\Lambda$ can be identified with the space of closed one forms (compact supported) on $\Lambda$, i.e.
\[T_{\Lambda}\mathcal{L}(L,X)=\{\beta\in \Omega^1(\Lambda)|d\beta =0\}.\]
\end{lemma}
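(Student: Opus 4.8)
\emph{Proof proposal.} The plan is to read off the tangent space at $\Lambda$ as a quotient of the tangent space to the embedding space $\widehat{\mathcal{L}}(L,X)$, and then transport the resulting space of closed one-forms from $L$ to $\Lambda$ by means of the embedding $\iota$ itself. Since by construction $\mathcal{L}(L,X)=\widehat{\mathcal{L}}(L,X)/{\rm Diff}(L)$ and $\Lambda=\iota(L)$, the infinite-dimensional manifold structure furnished by \cite{AS01,IO07} identifies
\[
T_\Lambda\mathcal{L}(L,X)\;\cong\;T_\iota\widehat{\mathcal{L}}(L,X)\big/T_\iota(\iota\cdot{\rm Diff}(L)).
\]
First I would take this identification as the starting point, so that the task reduces to computing the right-hand quotient and then carrying it over to $\Lambda$.

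Next I would invoke the map $v\mapsto \alpha:=\omega(v,d\iota\cdot)$ already introduced above. The key computation is that it annihilates the orbit directions: for $v=d\iota\circ\xi$ with $\xi\in\mathfrak{X}(L)$ and any $Y$, one has $\alpha(Y)=\omega(d\iota\cdot\xi,d\iota\cdot Y)=(\iota^*\omega)(\xi,Y)=0$ because $\iota$ is Lagrangian. Hence the map descends to the quotient, and by the isomorphism statement recorded in the discussion preceding the lemma it furnishes a linear isomorphism from $T_\iota\widehat{\mathcal{L}}(L,X)/T_\iota(\iota\cdot{\rm Diff}(L))$ onto $\{\beta\in\Omega^1(L)\mid d\beta=0\}$. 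It then remains to push forward along $\iota$: since $\iota\colon L\to\Lambda$ is a diffeomorphism onto its image, $(\iota^{-1})^*\colon\Omega^1(L)\to\Omega^1(\Lambda)$ is a linear isomorphism that commutes with the exterior derivative, and therefore restricts to an isomorphism of closed one-forms. Composing the two isomorphisms yields the claimed identification, and the velocity vector $\beta_t={\iota_t}_*\alpha_t$ defined above is precisely the image of a tangent vector under this composite.

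The step I expect to require the most care is the well-definedness, namely that the closed one-form on $\Lambda$ attached to a tangent vector is independent of the chosen lift $\{\iota_t\}$. Two lifts of the same path $\{\Lambda_t\}$ differ by a time-dependent reparametrization $\iota_t'=\iota_t\circ\phi_t$ with $\phi_t\in{\rm Diff}(L)$; infinitesimally this alters $v$ by an orbit direction $d\iota\circ\xi$, which maps to the zero one-form on $L$ by the Lagrangian computation above, while the change of parametrization of $\Lambda$ is exactly compensated by the pushforward ${\iota_t}_*$. Thus $\beta_t$ is unchanged, and the identification is canonical; this is the content already asserted in the definition of the velocity vector and is the only genuinely non-formal point in assembling the lemma.
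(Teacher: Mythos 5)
Your proposal is correct and follows essentially the same route as the paper, which states this lemma as a direct consequence of the preceding discussion: the quotient identification $T_\Lambda\mathcal{L}(L,X)\cong T_\iota\widehat{\mathcal{L}}(L,X)/T_\iota(\iota\cdot{\rm Diff}(L))$, the map $v\mapsto\omega(v,d\iota\cdot)$ inducing an isomorphism onto closed one-forms on $L$ (your explicit check that orbit directions $d\iota\circ\xi$ are annihilated, via $\iota^*\omega=0$, is exactly the content of the paper's remark that the map is an isomorphism ``since $\iota$ is Lagrangian''), and the transport to $\Lambda$ via ${\iota_t}_*$ with independence of the chosen lift. The only point you leave at the same level of brevity as the paper is the full bijectivity of the descended map (kernel equals orbit directions by maximal isotropy, surjectivity by nondegeneracy of $\omega$), so nothing is lost relative to the source.
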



A path $\{\Lambda_t\}\subset\mathcal{L}(L,X)$  of Lagrangian submanifolds is called an exact Lagrangian path connecting $\Lambda_0$ and $\Lambda_1$, if there exists a compactly supported Hamiltonian isotopy $\{\psi_t\}_{0\leq t\leq 1}$ of $X$ such that $\psi_t(\Lambda_0)=\Lambda_t$ for every $t\in [0,1]$. From the work of Akveld-Salamon \cite{AS01}, a smooth path $\{\Lambda_t\}_{0\leq t\leq 1}$ in $\mathcal{L}(L,X)$ whose velocity vectors $\beta_t\in \Omega^1(\Lambda_t)$ are exact for all $t\in [0,1]$ is nothing but an exact path.

\begin{lemma}\label{Exact-Path-Exact-Tangent-vector}\cite{AS01}Let $\{\Lambda_t\}_{0\leq t\leq 1}\subset \mathcal{L}(L,X)$ be a smooth path of Lagrangian submanifolds. Then $\frac{d}{dt}\Lambda_t\in \Omega^1(\Lambda_t)$ is exact for every $t\in [0,1]$ if and only if $\{\Lambda_t\}$ is an exact path, that is, there exists a Hamiltonian isotopy $\{\psi_t\}$ such that $\psi_t(\Lambda_0)=\Lambda_t$ for every $t\in [0,1]$.
\end{lemma}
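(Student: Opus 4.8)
The plan is to prove both implications from the single identity $\omega(X_{H},d\iota\cdot)=\iota^{*}(dH)$, together with the fact that a Lagrangian submanifold is its own symplectic orthogonal complement; here and below I write $X_{H}$ for the Hamiltonian vector field of a function $H$ on $X$, normalized by $\omega(X_{H},\cdot)=dH$. For the easy direction $(\Leftarrow)$, I would start from a compactly supported Hamiltonian isotopy $\{\psi_{t}\}$ generated by $H_{t}$, so that $\frac{d}{dt}\psi_{t}=X_{H_{t}}\circ\psi_{t}$ and $\psi_{t}(\Lambda_{0})=\Lambda_{t}$, and choose the lift $\iota_{t}:=\psi_{t}\circ\iota_{0}$. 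Differentiating gives $\frac{d}{dt}\iota_{t}=X_{H_{t}}\circ\iota_{t}$, and substituting into \eqref{Tangent-Vector-PreLagrange-Space} yields
\[
\alpha_{t}=\omega\Big(\tfrac{d}{dt}\iota_{t},d\iota_{t}\cdot\Big)=\iota_{t}^{*}\big(\omega(X_{H_{t}},\cdot)\big)=\iota_{t}^{*}(dH_{t})=d(\iota_{t}^{*}H_{t}).
\]
Pushing forward by $\iota_{t}$ shows $\frac{d}{dt}\Lambda_{t}=\beta_{t}={\iota_{t}}_{*}\alpha_{t}=d(H_{t}|_{\Lambda_{t}})$ is exact, with primitive the restriction of the Hamiltonian to $\Lambda_{t}$.

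For the substantive direction $(\Rightarrow)$, I would assume $\beta_{t}=dh_{t}$ for a smooth family $h_{t}\in C^{\infty}(\Lambda_{t})$, equivalently $\alpha_{t}=dg_{t}$ with $g_{t}:=h_{t}\circ\iota_{t}$, and reconstruct the isotopy by a Moser-type argument. First I would extend $h_{t}$ to a compactly supported function $H_{t}$ on $X$ with $H_{t}|_{\Lambda_{t}}=h_{t}$, so that $H_{t}\circ\iota_{t}=g_{t}$, and let $\{\psi_{t}\}$ be the (complete, since compactly supported) Hamiltonian flow of $X_{H_{t}}$. To verify $\psi_{t}(\Lambda_{0})=\Lambda_{t}$, I would set $F_{t}:=\psi_{t}^{-1}\circ\iota_{t}$ and compute $\frac{d}{dt}F_{t}=d\psi_{t}^{-1}\big(\tfrac{d}{dt}\iota_{t}-X_{H_{t}}\circ\iota_{t}\big)$. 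The two defining identities give $\omega\big(\tfrac{d}{dt}\iota_{t}-X_{H_{t}}\circ\iota_{t},\,d\iota_{t}\cdot\big)=dg_{t}-dg_{t}=0$, so the vector field $\frac{d}{dt}\iota_{t}-X_{H_{t}}\circ\iota_{t}$ is $\omega$-orthogonal to $T\Lambda_{t}$. Since $\Lambda_{t}$ is Lagrangian, $(T\Lambda_{t})^{\omega}=T\Lambda_{t}$, so this vector field is in fact tangent to $\Lambda_{t}$, i.e. equals $d\iota_{t}(\xi_{t})$ for some $\xi_{t}\in\mathfrak{X}(L)$. Hence $\frac{d}{dt}F_{t}=dF_{t}(\xi_{t})$, which means $F_{t}=\iota_{0}\circ\theta_{t}$ where $\theta_{t}$ is the flow of $\xi_{t}$ on $L$; in particular $F_{t}(L)=\Lambda_{0}$, which rearranges to $\psi_{t}(\Lambda_{0})=\Lambda_{t}$, as desired.

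\textbf{The hard part} will be the analytic bookkeeping rather than the geometric idea. The exactness hypothesis is only pointwise in $t$, so I would first need to upgrade it to a \emph{smooth} family of primitives $h_{t}$ (normalizing, say, by fixing $\int_{\Lambda_{t}}h_{t}$ against a reference volume, or the value at a continuously chosen base point), and then extend $h_{t}$ off $\Lambda_{t}$ to $H_{t}\in C_{c}^{\infty}(X)$ coherently in $t$ so that $H_{t}$ and its flow remain compactly supported. In the non-compact case one must additionally arrange that all the data agree with $\iota_{0}$ outside a fixed compact set, which forces the primitives and extensions to vanish there. These are exactly the technical points carried out carefully by Akveld--Salamon \cite{AS01}, to whom I would defer for the fully rigorous construction.
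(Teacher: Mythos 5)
Your argument is correct in outline, but note that the paper itself contains no proof of this lemma: it is quoted verbatim from Akveld--Salamon \cite{AS01}, and the paper only records the consequence \eqref{Exact-Tangent-vector-Hamilton-function}. So the comparison is between your self-contained sketch and the cited reference rather than an internal proof. Your $(\Leftarrow)$ direction is exactly the computation underlying \eqref{Exact-Tangent-vector-Hamilton-function}: with the lift $\iota_t=\psi_t\circ\iota_0$ one gets $\alpha_t=\iota_t^*(dH_t)=d(H_t\circ\iota_t)$, and exactness of $\frac{d}{dt}\Lambda_t$ follows since $\beta_t$ is lift-independent. Your $(\Rightarrow)$ direction contains the right key idea, which is the Moser-type step: the discrepancy field $\frac{d}{dt}\iota_t-X_{H_t}\circ\iota_t$ pairs to zero with $d\iota_t(TL)$ under $\omega$, hence by $(T\Lambda_t)^{\omega}=T\Lambda_t$ it is tangent to $\Lambda_t$ and only reparametrizes, so $\psi_t^{-1}\circ\iota_t$ has constant image. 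One small imprecision: the family $\theta_t$ is not literally the flow of $\xi_t$; rather, if $\varphi_t$ denotes the flow of $-\xi_t$ on $L$, then $\frac{d}{dt}(F_t\circ\varphi_t)=dF_t(\xi_t\circ\varphi_t)+dF_t(-\xi_t\circ\varphi_t)=0$, so $F_t=F_0\circ\varphi_t^{-1}$; the conclusion $F_t(L)=\Lambda_0$ is unaffected. The technical points you defer are genuine and are precisely where the content of the Akveld--Salamon formulation lies: smoothness in $t$ of the primitives (harmless when $L$ is connected, via base-point normalization of the path integral of $\alpha_t$), a coherent compactly supported extension $H_t$ (e.g.\ by extending off the total space $\{(t,p):p\in\Lambda_t\}\subset[0,1]\times X$ via a tubular neighborhood), and, in the non-compact case, the fact that a compactly supported closed exact form need not admit a compactly supported primitive when $L$ has several ends, which is exactly why the paper restricts to compactly supported data in its definition of $\mathcal{H}_\Lambda$. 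In the paper's actual application, $L=\mathbb{T}^n$ is compact and connected, so none of these issues arise.
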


Lemma \ref{Exact-Path-Exact-Tangent-vector} says that a smooth path $\{\Lambda_t\}_{0\leq t\leq 1}\subset \mathcal{L}(L,X)$  is an exact path if and only if there exists a lift $\{\iota_t\}_{0\leq t\leq 1}$ of $\{\Lambda_t\}$ which satisfies
\begin{equation}\label{Exact-Tangent-vector-Hamilton-function}
 \alpha_t:=\omega (\frac{d}{dt}\iota_t, d\iota_t\cdot)=d(h_t\circ \iota_t)
\end{equation}
for functions $h_t\in C^\infty(\Lambda_t)$. Any $\{\psi_t\}$ satisfying $\psi_t(\Lambda_0)=\Lambda_t$ is generated by a Hamiltonian function $H\in C_0^\infty([0,1]\times X)$ such that $H_t|_{\Lambda_t}=h_t$ for each $t\in [0,1]$ and vice versa.

%
%
%

\subsection{Lagrangians in a Calabi-Yau manifold}

Let $(X,J,\omega,\Omega)$ be a Calabi-Yau manifold, where $(X,J,\omega)$ is a $n$-dimensional K\"{a}hler manifold with complex structure $J$ and K\"{a}hler metric $\omega$, and $\Omega$ is a  nowhere vanishing holomorphicn $n$-form, with normalization $|\Omega|=1$.  For any point $p\in X$, there exist holomorphic coordinates $(z_1,\cdots, z_n)$ such that
\[\omega(p)=\frac{\sqrt{-1}}{2}\sum_{i=1}^ndz_i\wedge d\bar{z_i},~~~~\Omega(p)=e^{-\sqrt{-1}\theta}dz_1\wedge dz_2\wedge\cdots\wedge dz_n.\] For any oriented real $n$-plane $\tau\in T_pX$, Harvey-Lawson \cite[Prop 1.14]{HL82} showed the Lagrangian inequality hold,
\begin{equation}\label{HL-Lag-Ineq-1}
\big|{\rm Re}\Omega|_\tau\big|_{g}^2+\big|{\rm Im}\Omega|_\tau\big|_{g}^2\leq 1,
\end{equation}
with the equality if and only if $\tau$ is Lagrangian. For $\Lambda\subset X$ a Lagrangian submanifold, there exist $\vartheta:\Lambda\rightarrow \mathbb{R}/2\pi\mathbb{Z}$  such that
\begin{equation}\label{Phaasefun}
\Omega|_\Lambda=e^{\sqrt{-1}\vartheta}{\rm vol}_\Lambda,
\end{equation}
which is called  the Lagrangian  phase  function of $\Lambda$. The existence of Lagrangian  phase  function follows from the equality case of \eqref{HL-Lag-Ineq-1}.

 The Lagrange phase measures the rotation index of the angle of the tangent plane of the
Lagrangian submanifolds which illustrates an interesting interplay between symplectic
and Riemannian geometry of Lagrangian submanifolds, e.g. $d\vartheta=\iota_{\vec{H}}\omega$, here $\vec{H}$ is the mean curvature vector of $\Lambda$ \cite[(2.19)]{HL82}.

 Following Harvey-Lawson\cite{HL82}, $\Lambda$ is called special Lagrangian if the Lagrangian  phase  function of $\Lambda$ is constant. In the language of Calibrated geometry, special Lagrangian submanifold $\Lambda$ is calibrated by ${\rm Re}(e^{-\sqrt{-1}\theta}\Omega)$, or alternatively, ${\rm Im}(e^{-\sqrt{-1}\theta}\Omega)|_\Lambda=0$.

Now let consider a typical example. Suppose $\Lambda$ is a Lagrangian submanifold  of $\mathbb{C}^n$. Locally, $\Lambda$ can be described explicitly as the graph of  a function over a tangent plane. With no loss of generality, we may consider $\Lambda$ to be given as the graph over the axis plane $\mathbb{R}^n$, in $\mathbb{C}^n=\mathbb{R}^n+\sqrt{-1}\mathbb{R}^n$, of a function $y=f(x)$ where $z=x+\sqrt{-1}y$.
Moreover, $\Lambda$ is Lagrangian if and only if the $1$-form $f_idx_i$ is closed. Consequently, there exists (in general only locally
defined) function $u$ such that $f(x)=\nabla u(x)$, see \cite{HL82}. In this case, $u$ is called the potential function of $\Lambda$. For the graph $\Lambda=\{(x,\nabla u(x)),x\in \mathbb{R}^n\}\subset \mathbb{C}^n$, then
  \[\begin{split}
  e^{\sqrt{-1}\vartheta}&=\frac{\Omega|_\Lambda}{{\rm vol}_\Lambda}=\frac{e^{-\sqrt{-1}\theta}\det(I+\sqrt{-1}\nabla^2u)}{\sqrt{\det (Id+(\nabla^2u)^2)}}\\
  &=e^{-\sqrt{-1}\theta}\prod_{i=1}^n\frac{1+\sqrt{-1}\lambda_i}{\sqrt{1+\lambda_i^2}}\\
  &=e^{\sqrt{-1}(\sum\limits_{i=1}^n\arctan\lambda_i)-\theta},
  \end{split}\]
here $\lambda_i$ are the eigenvalue of $\nabla^2u$.
Then Lagrangian  phase can be given by
\begin{equation}\label{Lag-phase-fun-def}
\vartheta=\sum_{i=1}^n\arctan\lambda_i-\theta, ~~{\rm mod}~ 2\pi.
\end{equation}

\subsection{The space of positive Lagrangians in Calabi-Yau manifolds}
Following Solomon \cite{Sol13,Sol14}, a Lagrangian submanifold $\Lambda \in (X,J,\omega,\Omega)$ is positive if ${\rm Re}\Omega|_\Lambda$ is a volume form. Equivalently, the phase function of $\Lambda$ lie in the interval $(-\frac{\pi}{2}, \frac{\pi}{2})~{\rm mod}~2\pi$.
Denote by $\mathcal{L}^+=\mathcal{L}^+(X,L)\subset\mathcal{L}(X,L)$ the subspace of positive Lagrangian submanifolds. Let $\mathcal{O}\subset \mathcal{L}^+$ be a compactly supported exact isotopy class of positive Lagrangian submanifolds. That is, $\mathcal{O}$ is the collection of all $\Lambda\in \mathcal{L}^+$ that can be connected to a fixed point in $\mathcal{L}^+$ by an exact compact supported path. The isotopy class $\mathcal{O}$ is a submanifold of $\mathcal{L}^+$, and for $\Lambda\in \mathcal{O}$ the tangent space $T_\Lambda\mathcal{O}$ is canonically isomorphic to the space of exact $1$- forms on $\Lambda$ with compact support.  Let $\mathcal{H}_\Lambda$ denote the space of smooth function on $\Lambda$ with the following normalization: if $\Lambda$ is compact, then $\int_\Lambda h {\rm Re}\Omega=0$ and if $\Lambda$ is non-compact, then $h$ has compact support. With Lemma \ref{Exact-Path-Exact-Tangent-vector}, one can identify $T_\Lambda\mathcal{O}$ with $\mathcal{H}_\Lambda$.
Following \cite{Sol13,Sol14}, let
\[(\cdot,\cdot):H_\Lambda \times H_\Lambda \rightarrow \mathbb{R} \]
is given by
\begin{equation}\label{L2-metric-Pos-Lag-1}
(h,k)=\int_\Lambda hk{\rm Re}\Omega.
\end{equation}
Then $(\cdot,\cdot)$ define a Riemannian metric on $\mathcal{O}$. For a exact path of Lagrangian submanifolds $\{\Lambda_t\}_{0\leq t\leq 1}\subset \mathcal{O}\subset\mathcal{L}^+(L,X)$, i.e
\[\frac{d}{dt}\Lambda_t=dh_t,\] it is natural to define the energy of the exact path of Lagrangian submanifolds $ \{\Lambda_t\}$ by
\begin{equation}\label{Energy-Pos-Lag-1}
 E(\Lambda_t)=\int_0^1(h_t,h_t)dt=\int_0^1\int_{\Lambda_t}h_t^2{\rm Re}\Omega.
\end{equation}

An exact Lagrangian path $\{\Lambda_t\}$ is called a geodesic if $\{\Lambda_t\}$ is a critical point of the energy functional.

\section{Geodesic on the space Positive Lagrangian Submanifolds}

\subsection{Lagrangian Geodesic Equation}
Following Solomon's  definition of Levi-Civita connection and geodesics for the space of  Lagrangian submanifolds in Calabi-Yau manifold\cite{Sol13}, we deduced the geodesic equation  of Lagrangian graphs over $\mathbb{T}^n$ in $\mathbb{C}^n$. The geodesic equation  of Lagrangian graphs are also contained in \cite{RS15}.

For any $\alpha \in H^1(\mathbb{T}^n)$, we consider a family of positive Lagrangian graph,
\[\Lambda_t=(x, \nabla u(t,x))\in \mathcal{L}^+(\mathbb{C}^n,\mathbb{T}^n),\]
where $u(t,-): \mathbb{T}^n\rightarrow\mathbb{R}$ are locally defined function (up to a constant) and satisfies $[du(t,-)]=\alpha$ for any $t$. Furthermore, with  a normalization condition $\int_{\Lambda_t}\frac{\partial}{\partial t} u(t,x){\rm Re}\Omega=0$, one can choose a globally defined function $v(t,x): [a,b]\times \mathbb{T}^n\rightarrow \mathbb{R}$ such that $u(t,x)=u(0,x)+v(t,x)$. Then the tangent vector field along $\Lambda_t$ is

\[\frac{\partial}{\partial t}\Lambda_t=(0,\frac{\partial}{\partial t}\nabla u(t,x))=J\frac{\partial}{\partial t}\nabla u(t,x)=J\frac{\partial}{\partial t}\nabla v(t,x),\]
or \[\omega(\frac{\partial}{\partial t}\Lambda_t,-)=d(-\frac{\partial}{\partial t}u(t,x))=-\frac{\partial}{\partial t}v(t,x),\]
which is exact. From lemma \ref{Exact-Path-Exact-Tangent-vector}, we see that $\{\Lambda_t\}_{0\leq t\leq 1}$  is an exact Lagrangian path. Note that all the derivatives of $u$ are globally defined, thus we does not involve $v$ in the following expression for simplicity.

With \eqref{Energy-Pos-Lag-1}, the energy of $\Lambda_t$ is given by
\begin{eqnarray*}
  E(\Lambda)&=&\frac{1}{2}\int_a^b(\frac{\partial u}{\partial t}, \frac{\partial u}{\partial t})dt \\
  &=&\frac{1}{2}\int_a^b\int_{\mathbb{T}^n}|\frac{\partial u}{\partial t}|^2{\rm Re}\Omega.
\end{eqnarray*}
Consider a variation of Lagrangian path with fixed endpoints,
$\Lambda_{s,t}=(x, \nabla u(s,t,x)), (s,t)\in(-\epsilon,\epsilon)\times[a,b]$, with $\Lambda_{s,a}=\Lambda_a$, $\Lambda_{s,b}=\Lambda_b$. Then
\[\frac{\partial}{\partial t}\Lambda_{s,t}=(0,\frac{\partial}{\partial t}\nabla u(s,t,x))=d(-\frac{\partial}{\partial t}u(s,t,x)),\]
and
\[\frac{\partial}{\partial t}\Lambda_{s,t}\big|_{t=a, b}=0.\]
Now we compute the first variation of the energy functional,
\begin{eqnarray*}
  E(\Lambda_{s,})&=&\frac{1}{2}\int_a^b\int_{\mathbb{T}^n}|\frac{\partial}{\partial t}u(s,t,x)|^2{\rm Re}\Omega,
\end{eqnarray*}
so,
\begin{equation}\label{Lag-Graph-Geodesic-1}
\begin{split}
  \frac{d}{ds}E(\Lambda_{s,}) =&\frac{1}{2}\frac{d}{ds}\int_a^b\int_{\mathbb{T}^n}|\frac{\partial u}{\partial t}|^2{\rm Re}\Omega\\
  =&\int_a^b\int_{\mathbb{T}^n} \frac{\partial^2u}{\partial s\partial t}\frac{\partial u}{\partial t}{\rm Re}\Omega+\frac{1}{2}\int_a^b\int_{\mathbb{T}^n}|\frac{\partial u}{\partial t}|^2\frac{\partial}{\partial s}{\rm Re}\Omega.
  \end{split}
\end{equation}
For the first term in \eqref{Lag-Graph-Geodesic-1}, one can change the differential order,
\begin{equation}\label{Lag-Graph-Geodesic-2}
\begin{split}
  \int_a^b&\int_{\mathbb{T}^n}\frac{\partial^2 u}{\partial s\partial t}\frac{\partial u}{\partial t}{\rm Re}\Omega\\
  =&\int_a^b\int_{\mathbb{T}^n} \frac{\partial}{\partial t}\big(\frac{\partial u}{\partial s}\frac{\partial u}{\partial t}{\rm Re}\Omega\big)-\frac{\partial u}{\partial s}\Big(\frac{\partial^2u}{\partial t^2}{\rm Re}\Omega+\frac{\partial u}{\partial t}\frac{\partial}{\partial t}{\rm Re}\Omega\Big).
  \end{split}
\end{equation}
For the second term in \eqref{Lag-Graph-Geodesic-1}, since $\Omega$ is of type $(n,0)$ and $\omega$ is of type $(1,1)$, we have
\[\omega\wedge{\rm Re}\Omega=0.\]
Let $\xi, \zeta$ be two Hamiltonian vector associated to the Hamiltonian function $H, K$, then we have
\[0=i_\xi i_\zeta(\omega\wedge{\rm Re}\Omega)=\{H,K\}{\rm Re}\Omega-dK\wedge i_\xi{\rm Re}\Omega+dH\wedge i_\zeta{\rm Re}\Omega+\omega\wedge i_\xi i_\zeta{\rm Re}\Omega.\]
Since $\Lambda$ is Lagrangian, then $\omega|_\Lambda=0$.
Moreover, by integration by parts, we have
\begin{equation}\label{Hamiltonian-Commutative-1}
\begin{split}
  \int_\Lambda\{H,K\}{\rm Re}\Omega&=\int_\Lambda -dH\wedge i_\zeta{\rm Re}\Omega+dK\wedge i_\xi{\rm Re}\Omega-\omega\wedge i_\xi i_\zeta{\rm Re}\Omega \\
  &=\int_\Lambda(Hdi_\zeta{\rm Re}\Omega-Kdi_\xi{\rm Re}\Omega).
  \end{split}
\end{equation}
Taking $H=\frac{1}{2}|-\frac{\partial u}{\partial t}|^2$, $K=-\frac{\partial u}{\partial s}$, $\xi=-\frac{\partial u}{\partial t}J\nabla\frac{\partial u}{\partial t}$, $\zeta=J\nabla\frac{\partial u}{\partial s}$ in \eqref{Hamiltonian-Commutative-1}, note that
\[\{H,K\}=\omega(\xi,\zeta)=-\frac{\partial u}{\partial t}\omega(J\nabla\frac{\partial u}{\partial t},J\nabla\frac{\partial u}{\partial s})=0,\]
thus
\begin{equation}\label{Lag-Graph-Geodesic-3}
\begin{split}
  \frac{1}{2}&\int_a^b\int_{\mathbb{T}^n}|-\frac{\partial u}{\partial t}|^2\frac{\partial}{\partial s}{\rm Re}\Omega\\
  =&\int_a^b\int_{\mathbb{T}^n}(-\frac{\partial u}{\partial s})d(-\frac{\partial u}{\partial t}i_{J\nabla\frac{\partial u}{\partial t}}{\rm Re}\Omega)\\
 =&\int_a^b\int_{\mathbb{T}^n}\frac{\partial u}{\partial s}d\frac{\partial u}{\partial t}\wedge i_{J\nabla\frac{\partial u}{\partial t}}{\rm Re}\Omega+\frac{\partial u}{\partial s}\frac{\partial u}{\partial t}\frac{\partial}{\partial t}{\rm Re}\Omega.
 \end{split}
\end{equation}
Combine the computation \eqref{Lag-Graph-Geodesic-2} and \eqref{Lag-Graph-Geodesic-3}, we have
\begin{eqnarray*}
  \frac{d}{ds}E(\Lambda_{s,}) &=&\int_a^b\int_{\mathbb{T}^n} \frac{\partial}{\partial t}\Big(\frac{\partial u}{\partial s}\frac{\partial u}{\partial t}{\rm Re}\Omega\Big)\\
  &&-\int_a^b\int_{\mathbb{T}^n}\frac{\partial u}{\partial s}\Big(\frac{\partial^2u}{\partial t^2}{\rm Re}\Omega-d\frac{\partial u}{\partial t}\wedge i_{-J\nabla\frac{\partial u}{\partial t}}{\rm Re}\Omega\Big)\\
&=&\int_{\mathbb{T}^n} \frac{\partial u}{\partial s}\frac{\partial u}{\partial t}{\rm Re}\Omega\Big|_a^b-\int_a^b\int_{\mathbb{T}^n}\frac{\partial u}{\partial s}(\frac{\partial^2 u}{\partial t^2}{\rm Re}\Omega-d\frac{\partial u}{\partial t}\wedge i_{J\nabla\frac{\partial u}{\partial t}}{\rm Re}\Omega).
\end{eqnarray*}

Consequently, a Lagrangian path $\Lambda_t=(x,\nabla u(t,x))$ is a critical point (geodesic) of the energy functional $E$  if and only if
 \begin{equation}\label{GeoEquLagGra1}
\frac{\partial^2 u}{\partial t^2}-\frac{d\frac{\partial u}{\partial t}\wedge i_{J\nabla\frac{\partial u}{\partial t}}{\rm Re}\Omega}{{\rm Re}\Omega}=0.
 \end{equation}

Now we will rewrite the equation \eqref{GeoEquLagGra1} in a explicit form. The holomorphic $n$-form
\[\Omega=e^{-\sqrt{-1}\theta}dz_1\wedge\cdots\wedge dz_n=e^{-\sqrt{-1}\theta}(dx_1+\sqrt{-1}dy_1)\wedge\cdots\wedge (dx_n+\sqrt{-1}dy_n),\] after restricted on the Lagrangian path, is reduced to be
\begin{eqnarray*}
  \Omega|_{\Lambda_t}&=&e^{-\sqrt{-1}\theta}(dx_1+\sqrt{-1}d\frac{\partial u}{\partial x_1})\wedge\cdots\wedge (dx_n+\sqrt{-1}d\frac{\partial u}{\partial x_n})\\
  &=&e^{-\sqrt{-1}\theta}\det(I+\sqrt{-1}\nabla^2u)dx_1\wedge\cdots\wedge dx_n.
\end{eqnarray*}
Therefore, we have
\begin{eqnarray*}
  &&d\frac{\partial u}{\partial t}\wedge i_{J\nabla\frac{\partial u }{\partial t}}{\rm Re}\Omega\big|_{\Lambda_t}\\
  &=&d\frac{\partial u}{\partial t}\wedge \sum_{i=1}^n(-1)^{i-1}{\rm Re}(\sqrt{-1}e^{-\sqrt{-1}\theta}\frac{\partial^2 u}{\partial t\partial x^i}\frac{\Omega}{dz^i})\big|_{\Lambda_t}\\
  &=&{\rm Re}(\sqrt{-1}e^{-\sqrt{-1}\theta}\nabla\frac{\partial u}{\partial t}\det (I+\sqrt{-1}\nabla^2u) (I+\sqrt{-1}\nabla^2u)^{-1} (\nabla\frac{\partial}{\partial t}u)^T).
    \end{eqnarray*}
Consequently,

 \begin{eqnarray*}
 &&\frac{\partial^2u}{\partial t^2}{\rm Re}\Omega-d\frac{\partial u}{\partial t}\wedge i_{J\nabla\frac{\partial u}{\partial t}}{\rm Re}\Omega\\
 &=&{\rm Re}\Big(e^{-\sqrt{-1}\theta}\det (I+\sqrt{-1}\nabla^2u)\big(\frac{\partial^2 u}{\partial t^2}-\sqrt{-1}\nabla\frac{\partial u}{\partial t} (I+\sqrt{-1}\nabla^2u)^{-1} (\nabla\frac{\partial u}{\partial t})^T\big)\Big)\\
 &=&{\rm Re}\Big(e^{-\sqrt{-1}\theta}\det\begin{bmatrix}
 \frac{\partial^2 u}{\partial t^2}-\sqrt{-1}\nabla\frac{\partial u}{\partial t}  (I+\sqrt{-1}\nabla^2u)^{-1} (\nabla\frac{\partial u}{\partial t})^T&0\\0&I+\sqrt{-1}\nabla^2u\end{bmatrix}\Big)\\
 &=&{\rm Im}\Big(e^{-\sqrt{-1}\theta}\det\begin{bmatrix}
 \sqrt{-1}\frac{\partial^2 u}{\partial t^2}&\sqrt{-1}\nabla\frac{\partial u}{\partial t}\\ (\sqrt{-1}\nabla\frac{\partial u}{\partial t})^T&I+\sqrt{-1}\nabla^2u
\end{bmatrix}\Big).
\end{eqnarray*}
\begin{proposition}\label{Lga-Geo_Equ_Pro_1}
The exact Lagrangian path $\Lambda_t=(x,\nabla u(t,x))$ is a geodesic (critical point of the energy functional $E$)  if and only if
\begin{equation}\label{GeoEquEuc3}
{\rm Im}\Big(e^{-\sqrt{-1}\theta}\det\begin{bmatrix}
 \sqrt{-1}\frac{\partial^2}{\partial t^2}&\sqrt{-1}\nabla\frac{\partial u}{\partial t}\\ (\sqrt{-1}\nabla\frac{\partial u}{\partial t})^T&I+\sqrt{-1}\nabla^2u
\end{bmatrix}\Big)=0,
\end{equation}
or equivalently,
\begin{equation}\label{GeoEquEuc2}
\begin{split}
\cos\theta\sum_{k=0}^{\lfloor\frac{n}{2}\rfloor}&(-1)^k\sigma_{2k+1}(D^2_{t,x}u)-\sin\theta\sum_{k=0}^{\lfloor\frac{n+1}{2}\rfloor}(-1)^k\sigma_{2k}(D^2_{t,x}u)\\
&=\cos\theta\sum_{k=0}^{\lfloor\frac{n-1}{2}\rfloor}(-1)^k\sigma_{2k+1}(\nabla^2_{x}u)-\sin\theta\sum_{k=0}^{\lfloor\frac{n}{2}\rfloor}(-1)^k\sigma_{2k}(\nabla^2_{x}u),
\end{split}
\end{equation}
here $\sigma_k$ is the $k$-th elementary symmetric function, $k=0,1,2,\cdots$.
\end{proposition}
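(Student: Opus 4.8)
The first assertion---that an exact path is a geodesic if and only if \eqref{GeoEquEuc3} holds---is already contained in the first variation computation carried out above: requiring $\frac{d}{ds}E(\Lambda_{s,\cdot})=0$ for all compactly supported variations with fixed endpoints forces the bracketed integrand \eqref{GeoEquLagGra1} to vanish, and restricting $\Omega$ to the graph $\Lambda_t$ together with the contraction identity for $i_{J\nabla\frac{\partial u}{\partial t}}{\rm Re}\,\Omega$ rewrites \eqref{GeoEquLagGra1} in the determinant form \eqref{GeoEquEuc3}. The only point left to prove is therefore that \eqref{GeoEquEuc3} and \eqref{GeoEquEuc2} are the \emph{same} equation, which is a purely algebraic expansion of the imaginary part of $e^{-\sqrt{-1}\theta}\det M$, where $M$ denotes the $(n+1)\times(n+1)$ matrix appearing in \eqref{GeoEquEuc3}.

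The key observation is that $M$ differs from $I_{n+1}+\sqrt{-1}\,D^2_{t,x}u$ only in its $(1,1)$ entry, where the summand $1$ is missing; writing $e_1$ for the unit vector in the $t$-direction, one has $M=(I_{n+1}+\sqrt{-1}\,D^2_{t,x}u)-e_1e_1^T$. I would then apply the matrix determinant lemma (equivalently, a cofactor expansion along the first row and column), which yields
\[
\det M=\det\!\big(I_{n+1}+\sqrt{-1}\,D^2_{t,x}u\big)-\det\!\big(I_{n}+\sqrt{-1}\,\nabla^2_{x}u\big),
\]
because the $(1,1)$ cofactor of $I_{n+1}+\sqrt{-1}\,D^2_{t,x}u$ is exactly $\det(I_n+\sqrt{-1}\,\nabla^2_x u)$, the lower-right spatial block. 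This is the step that explains the appearance of \emph{both} the full space-time Hessian and the spatial Hessian in \eqref{GeoEquEuc2}.

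Next I would insert the standard generating identity $\det(I+\sqrt{-1}\,A)=\sum_{k\ge 0}(\sqrt{-1})^k\sigma_k(A)$, valid for any symmetric $A$, once for the $(n+1)\times(n+1)$ matrix $D^2_{t,x}u$ and once for the $n\times n$ matrix $\nabla^2_x u$; the two constant terms $\sigma_0=1$ cancel. Separating the powers $(\sqrt{-1})^k$ according to the parity of $k$ and multiplying by $e^{-\sqrt{-1}\theta}=\cos\theta-\sqrt{-1}\sin\theta$, the vanishing of the imaginary part becomes
\[
\cos\theta\sum_{k}(-1)^k\big(\sigma_{2k+1}(D^2_{t,x}u)-\sigma_{2k+1}(\nabla^2_x u)\big)-\sin\theta\sum_{k}(-1)^k\big(\sigma_{2k}(D^2_{t,x}u)-\sigma_{2k}(\nabla^2_x u)\big)=0,
\]
and collecting the spatial terms on the right-hand side reproduces \eqref{GeoEquEuc2}; the four stated ranges $\lfloor n/2\rfloor$, $\lfloor (n+1)/2\rfloor$, $\lfloor (n-1)/2\rfloor$, $\lfloor n/2\rfloor$ are then forced by the matrix sizes $n+1$ and $n$, since $\sigma_j$ vanishes once $j$ exceeds the dimension. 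The computation is essentially bookkeeping once the rank-one reduction is in hand, so I expect no substantive difficulty; the only places demanding care are the sign in the matrix determinant lemma and matching the parities of $k$ with the correct truncation of each of the four sums.
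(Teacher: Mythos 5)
Your proposal is correct and follows essentially the same route as the paper: the geodesic characterization \eqref{GeoEquEuc3} is exactly the first-variation computation preceding the proposition, and the equivalence with \eqref{GeoEquEuc2} is the rank-one/cofactor identity $\det M=\det(I_{n+1}+\sqrt{-1}D^2_{t,x}u)-\det(I_n+\sqrt{-1}\nabla^2_xu)$ combined with the expansion $\det(I+\sqrt{-1}A)=\sum_k(\sqrt{-1})^k\sigma_k(A)$, which is precisely the algebra the paper itself carries out (with the parameter $\tau$) in Section 5. Your bookkeeping of the truncation ranges and of the parity split under multiplication by $e^{-\sqrt{-1}\theta}$ is accurate, so there is no gap.
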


\section{The special Lagrangian equation with parameter $\tau$}
In this section, we will prove Theorem \ref{SLE-Cyl-Exi-Thm} via the continuity method and a priori estimates.
Firstly,  we recall a  beautiful linear algebra lemma from \cite[p272]{CNS85}-which we will use.
\begin{lemma}\label{AsyBehEigLem2}
\begin{enumerate}
  \item Consider the $(n+1)\times (n+1)$ symmetric matrix
\[A=\begin{bmatrix}
a&a_1&\cdots&a_{n}\\
a_1&\lambda_1'&\cdots&0\\
\vdots&\vdots&\ddots&\vdots\\
a_n&0&\cdots&\lambda'_{n}\\
\end{bmatrix}\]
with $\lambda_1', \cdots, \lambda_{n}'$ are fixed, $|a_i|\leq C$, $1\leq i\leq n$. If we let $|a|\rightarrow \infty$, then the eigenvalues of $A$  asymptotically behave like
\[\lambda_1=\lambda_1'+o(1),\cdots,\lambda_n=\lambda_{n}'+o(1), \lambda_{n+1}=a(1+O(\frac{1}{a})),\]
where $o(1)$ and $O(\frac{1}{a})$ are uniform-depending only on $\lambda_1',\cdots,\lambda_n'$ and $C$.

\item Similarly, let
\[A=\begin{bmatrix}
\frac{a}{\tau}&\frac{a_1}{\sqrt{\tau}}&\cdots&\frac{a_{n}}{\sqrt{\tau}}\\
\frac{a_1}{\sqrt{\tau}}&\lambda_1'&\cdots&0\\
\vdots&\vdots&\ddots&\vdots\\
\frac{a_{n}}{\sqrt{\tau}}&0&\cdots&\lambda_{n}'\\
\end{bmatrix}\]
be a $(n+1)\times (n+1)$ symmetric matrix depending a parameter $\tau\in(0,1]$, where $\lambda_1', \cdots, \lambda_{n}'$ are fixed, $|a_i|\leq C, 1\leq i\leq n$. If we let $|a|\rightarrow \infty$, then the eigenvalues of $A$  asymptotically behave like
\[\lambda_1=\lambda_1'+o(1),\cdots,\lambda_{n}=\lambda_{n-1}'+o(1), \lambda_{n+1}=\frac{a}{\tau}(1+O(\frac{1}{a})), \]
where $o(1)$ and $O(1)$ are uniform as $a\rightarrow \infty$, depending only on $\lambda_1'$, $\cdots$, $\lambda_{n}'$ and $C$, not on $\tau$.
\end{enumerate}
\end{lemma}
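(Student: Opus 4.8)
The plan is to analyze the characteristic polynomial of the bordered (arrow-shaped) matrix directly, rather than invoking abstract perturbation theory, since this gives explicit control of the error terms and, crucially for part (2), control that is \emph{uniform} in $\tau$. Write the matrix in part (1) as $A=\begin{bmatrix} a & v^T \\ v & D \end{bmatrix}$ with $v=(a_1,\ldots,a_n)^T$ and $D=\mathrm{diag}(\lambda_1',\ldots,\lambda_n')$. For $\lambda$ distinct from every $\lambda_i'$ the Schur complement formula gives
\[
\det(A-\lambda I)=\Big(\prod_{i=1}^n(\lambda_i'-\lambda)\Big)\Big[(a-\lambda)-\sum_{i=1}^n\frac{a_i^2}{\lambda_i'-\lambda}\Big],
\]
and clearing denominators turns this into a genuine polynomial identity valid for all $\lambda$, so the possibly degenerate cases $a_i=0$ or coinciding $\lambda_i'$ cause no trouble. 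The eigenvalues are therefore the roots of the secular equation $a-\lambda=\sum_i a_i^2/(\lambda_i'-\lambda)$, together with any $\lambda_i'$ that decouples.

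First I would locate the $n$ bounded eigenvalues. Dividing the characteristic polynomial by $a$ and letting $a\to\infty$ with $\lambda$ restricted to a fixed compact set and $|a_i|\le C$, every term carrying a factor $1/a$ tends to zero uniformly, so $\det(A-\lambda I)/a$ converges uniformly to $\prod_i(\lambda_i'-\lambda)$. By continuity of the roots of a polynomial under coefficient perturbation, $n$ of the eigenvalues converge to the set $\{\lambda_1',\ldots,\lambda_n'\}$; feeding this back into the secular equation near a pole $\lambda_i'$ gives the sharper $\lambda=\lambda_i'+O(1/a)$, hence the $o(1)$ claim. For the remaining eigenvalue I would substitute $\lambda=a-\epsilon$ into the secular equation: the right-hand side then equals $-\sum_i a_i^2/a+O(1/a^2)$, forcing $\epsilon=O(1/a)$ and therefore $\lambda_{n+1}=a(1+O(1/a))$. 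Cauchy interlacing applied to the principal submatrix $D$ confirms that exactly one eigenvalue escapes while the other $n$ stay bounded, so the matching is as claimed.

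For part (2) I would repeat the computation but keep the parameter visible. The secular equation for $\begin{bmatrix} a/\tau & w^T \\ w & D\end{bmatrix}$ with $w_i=a_i/\sqrt\tau$ reads $a/\tau-\lambda=\tau^{-1}\sum_i a_i^2/(\lambda_i'-\lambda)$; the essential observation is that multiplying through by $\tau$ yields
\[
a-\tau\lambda=\sum_{i=1}^n\frac{a_i^2}{\lambda_i'-\lambda},
\]
whose right-hand side no longer contains $\tau$. For bounded $\lambda$ and $\tau\in(0,1]$ the term $\tau\lambda$ is uniformly bounded, so the argument of part (1) applies verbatim and produces $\lambda_i=\lambda_i'+O(1/a)$ with constants independent of $\tau$; for the large root, setting $\lambda=a/\tau-\delta$ gives $\tau\delta=-\sum_i a_i^2/a+O(1/a^2)$, i.e. $\delta=O(1/a)$ uniformly in $\tau$, whence $\lambda_{n+1}=(a/\tau)(1+O(1/a))$.

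The main obstacle, and the reason part (2) is not a formal corollary of part (1), is precisely this uniformity. If one tried to reduce (2) to (1) by setting $\tilde a=a/\tau$ and $\tilde a_i=a_i/\sqrt\tau$, the hypothesis $|\tilde a_i|\le C$ would fail as $\tau\to 0$ (indeed $|\tilde a_i|$ can blow up like $1/\sqrt\tau$), so the constants inherited from part (1) would degenerate. The point of multiplying the secular equation by $\tau$ is that it cancels exactly the offending $\tau$-dependence, leaving a $\tau$-free right-hand side; verifying that the subsequent estimates are then genuinely uniform in $\tau$ is the only delicate bookkeeping, and it is what the statement's final clause asserts.
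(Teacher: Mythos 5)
Your approach is correct in substance but takes a genuinely different route from the paper. The paper's proof (for part (1) it simply cites \cite{CNS85}; for part (2) it repeats that argument) rescales rows and columns of $\det(A-\lambda I)$ by factors of $\sqrt{\tau}$ and $1/a$ so that the characteristic equation has a well-defined limit as $|a|\to\infty$, reads off the bounded roots $\lambda_i'+o(1)$ from continuity of roots of polynomials, and then substitutes $\lambda=a\mu$ and invokes the implicit function theorem at the simple root $\mu=1/\tau$ to produce the large eigenvalue. You instead reduce to a scalar secular equation via the Schur complement, estimate its roots directly, and use Cauchy interlacing for the counting. Your route buys something real: the $\tau$-multiplied secular equation $a-\tau\lambda=\sum_i a_i^2/(\lambda_i'-\lambda)$ makes the uniformity in $\tau$ completely transparent and yields explicit $O(1/a)$ rates with explicit constants, whereas in the paper's argument the uniformity of the implicit-function-theorem step as $\tau\to0$ (where the reference root $\mu=1/\tau$ itself diverges) is asserted rather than exhibited. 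Your closing remark about why (2) is not a formal corollary of (1) is also exactly the right diagnosis.

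One computation needs repair, and it sits at the crucial point of part (2). Setting $\lambda=a/\tau-\delta$ in $a-\tau\lambda=\sum_i a_i^2/(\lambda_i'-\lambda)$ does give left-hand side $\tau\delta$, but the right-hand side is
\[
\sum_{i=1}^{n}\frac{a_i^2}{\lambda_i'+\delta-a/\tau}
=-\frac{\tau}{a}\sum_{i=1}^{n}a_i^2\Bigl(1+O\bigl(\tfrac{\tau}{a}\bigr)\Bigr),
\]
which carries a factor of $\tau$ that your display drops. As written, $\tau\delta=-\sum_i a_i^2/a+O(1/a^2)$ would force $\delta=O\bigl(1/(\tau a)\bigr)$, which is \emph{not} uniform in $\tau$ and contradicts the conclusion you draw from it. With the corrected right-hand side the factors of $\tau$ cancel and you get $\delta=-\frac{1}{a}\sum_i a_i^2+O(\tau/a^2)=O(1/a)$ uniformly in $\tau\in(0,1]$, hence $\lambda_{n+1}=\frac{a}{\tau}\bigl(1+O(1/a)\bigr)$ as claimed; so the method survives intact, but the display must be fixed, since in its current form the key uniformity assertion does not follow from it.
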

\begin{proof} The  part (1) of Lemma \ref{AsyBehEigLem2} is exactly contained in \cite{CNS85}; using the same argument, we  provide a proof of  part (2) for reader's convenience.  The eigenvalues $\lambda$ of $A$ satisfy
\[\det\begin{bmatrix}
1-\frac{\tau}{a}\lambda&\frac{a_1}{a}&\cdots&\frac{a_{n}}{a}\\
a_1&\lambda_1'-\lambda&\cdots&0\\
\vdots&\vdots&\ddots&\vdots\\
a_{n}&0&\cdots&\lambda_{n}'-\lambda\\

\end{bmatrix}=0.\]
Hence for $|a|=\infty$ the numbers $\lambda_1'$, $\cdots$, $\lambda_{n}'$ are roots. By continuity of the roots it follows that there are roots given by
\[\lambda_i=\lambda_i'+o(1),~~~i =1, \cdots, n.\]
To find the last eigenvalue, set $\lambda=a\mu$, then $\mu$ satisfies
\[\det\begin{bmatrix}
1-\mu\tau&\frac{a_1}{a}&\cdots&\frac{a_{n}}{a}\\
\frac{a_1}{a}&\frac{\lambda_1'}{a}-\mu&\cdots&0\\
\vdots&\vdots&\ddots&\vdots\\
\frac{a_{n}}{a}&0&\cdots&\frac{\lambda_{n}'}{a}-\mu\\
\end{bmatrix}=0.\]
For $|a|=\infty$, we see that $\mu=\frac{1}{\tau}$ is a simple root. By the implicit function theorem it follows that for $|a|$ large there is a root
\[\lambda_{n+1}=\frac{a}{\tau}(1+O(\frac{1}{a})).\]
\end{proof}

Now we will construct an admissible function $\underline{v}^\tau$ which is also a subsolution of \eqref{SLE1}.
Assuming that $u_0, u_1$, satisfies \eqref{Adm-Bou-Con-1}, we denote
\begin{equation}\label{Adm-Bou-Con-3}
\delta:=\frac{1}{2}\min\big\{\min_{x\in \mathbb{T}^n}\vartheta(\nabla^2u_0(x))+\frac{\pi}{2}-\Theta, \min_{x\in \mathbb{T}^n}\vartheta(\nabla^2u_1(x))+\frac{\pi}{2}-\Theta\big\}>0.
\end{equation}
For large parameter $\lambda$ consider
\begin{equation}\label{Adm-Sub-Sol-def}
\underline{v}^\tau=\frac{1}{2}\lambda \frac{s}{\sqrt{\tau}}(\frac{s}{\sqrt{\tau}}-1),
\end{equation}
then
\[\chi^\tau+D^2\underline{v}^\tau=\begin{bmatrix}
  \frac{\lambda}{\tau} &\frac{1}{\sqrt{\tau}}\nabla (u_1-u_0)\\
  \frac{1}{\sqrt{\tau}}\nabla (u_1-u_0)^T&\nabla^2\tilde{u}^\tau
\end{bmatrix},\]
here $\tilde{u}^\tau$ is defined in \eqref{Lin-Lag-Pot-1},  and $\chi^\tau$ is defined  in \eqref{Hes-Lin-Lag-1}.
By part (2) of lemma \ref{AsyBehEigLem2}, if $|\lambda|\rightarrow\infty$, we have
\begin{equation}\label{StrSubSol1}
\begin{split}
  f(\lambda(\chi^\tau+D^2\underline{v}^\tau))&=\sum_{i=1}^{n+1}\arctan\lambda_i(\chi^\tau+D^2\underline{v}^\tau)\\
   &=\arctan \Big(\frac{\lambda}{\tau}\big(1+O(\frac{1}{\lambda})\big)\Big)+\sum_{i=1}^n\big(\lambda_i(\nabla^2\tilde{u}^\tau)+o(1)\big)\\
&\geq \arctan \frac{\lambda}{\tau}+O(\frac{1}{\lambda})+\Theta-\frac{\pi}{2}+2\delta+o(1),
\end{split}
\end{equation}
where we used  the condition \eqref{Adm-Bou-Con-3} and the concave property of the eigenvalue as a function of matrix in the last inequality, and  $o(1)$ and $O(\frac{1}{\lambda})$ are uniform-only depending on $u_0$ and $u_1$, not on $\tau$. In particular, we can
take $\lambda\geq K_1=K_1(u_0,u_1)$ large, such that the error terms $O(\frac{1}{\lambda}), o(1)$ in \eqref{StrSubSol1} are dominated by $\delta$.
 Furthermore, if we take
 \begin{equation}\label{StrSubSol-Par-1}
 \lambda=\underline{\lambda}^\tau=\max\{K_1,\frac{\tau}{\tan  \delta }\},
 \end{equation}
 then
 \begin{equation}\label{StrSubSol3}
  f(\lambda(\chi^\tau+D^2\underline{v}^\tau))\geq\Theta+ \delta.
\end{equation}

\subsection{The continuity method:}
For any fixed $\tau\in (0,1]$, we follow the continuity method to solve the Dirichlet problem of special Lagragian equation \eqref{SLE1}.  Consider the set $E$ of all $\zeta\in [0,1]$ such that there exists a function $v=v^{\tau,\zeta}\in C^\infty([0,\sqrt{\tau}]\times \mathbb{T}^n)$ which solve the Dirichlet problem
\begin{equation}\label{ConSpeLagDirProTau}
f(\lambda(\chi^\tau+D^2v))=\sum_{i=1}^{n+1}\arctan\lambda_i(\chi+D^2v)=\varphi^\tau \quad{\rm in~} [0,\sqrt{\tau}]\times \mathbb{T}^n,
\end{equation}
with
\begin{equation}\label{ConSpeLagZeroBou}
v=0\quad {\rm on~} \{0,\sqrt{\tau}\}\times \mathbb{T}^n,
\end{equation}
and
\begin{equation}\label{ConSpeLagDirProConRig_1}
\varphi^\tau:=(1-\zeta)f(\lambda(\chi^\tau+D^2\underline{v}^\tau))+\zeta\Theta.
\end{equation}

Note that $v^{\tau,\zeta}$ is unique by the ellipticity and the comparison principle of special Lagrangian equation in the Hessian type, see \cite[Lemma B]{CNS85}. By the construction of $\varphi^\tau$ in \eqref{ConSpeLagDirProConRig_1}, for $\zeta=1$, the solution $v^\tau:=v^{\tau,1}$ of \eqref{ConSpeLagDirProTau} is the desired solution of the special Lagragian equation \eqref{SLE1}; for $\zeta=0$, the function $v^{\tau,0}:=\underline{v}^\tau$ is a solution of \eqref{ConSpeLagDirProTau}, i.e.  $0\in E$ and thus the set $E$ is non-empty.

Moreover, the linearized  operator of special Lagrangian equation \eqref{ConSpeLagDirProTau}
 \[L_{v}=F^{ij}(\chi+D^2v)D^2_{ij}=\frac{\partial}{\partial v_{ij}}f(\lambda(\chi+D^2v))D^2_{ij}\]is an strictly elliptic linear operator; it follows form the standard elliptic regularity and the inverse function theorem that $E$ is open.

In the following, we will assume $\zeta>\zeta_0=\zeta_0(\tau)>0$ and show $E$ is also closed and thus $\zeta=1\in E$. With the Schauder theory and standard elliptic bootstrapping argument, it is equivalent to derive the uniform a pori estimate
\[\|v^{\tau,\zeta}\|_{C^{2,\alpha}([0,\sqrt{\tau}]\times \mathbb{T}^n)}\leq C,\]
where $C$ does not depend on $\zeta$, but may depend on $\tau$. In the following, we  ignore the parameter $\tau$ for convenience.

\noindent\textbf{Subsolution:} By \eqref{StrSubSol3}, $\underline{v}=\underline{v}^\tau$ is an admissible subsolution to \eqref{ConSpeLagDirProTau}, and satisfies
\begin{equation*}
         f(\lambda(\chi+D^2\underline{v}))=\varphi
         +\zeta\big((\Theta(\chi^\tau+D^2\underline{v}^\tau)-\Theta\big)         \geq \varphi+\zeta\delta,~~~  {\rm in~}  [0,\sqrt{\tau}]\times \mathbb{T}^n.
       \end{equation*}
Since $v^\zeta=\underline{v}=0$ on $\{0,\sqrt{\tau}\}\times \mathbb{T}^n$,  by the comparison principle, we have
\begin{equation}\label{Upper-estimate-1}
v^\zeta\geq \underline{v} ~~~  {\rm in~} [0,\sqrt{\tau}]\times \mathbb{T}^n.
\end{equation}
Moreover, since $\Theta\geq\frac{n-1}{2}\pi$,  using the concavity we find that
\[f(\lambda(\chi+D^2\underline{v}))\leq f(\lambda(\chi+D^2v^\zeta))+F^{ij}(\chi+D^2v^\zeta)D_{ij}(\underline{v}-v^\zeta),\]
so that
\begin{equation}\label{SubSol-Barrier-1}
L_{v^\zeta}(\underline{v}-v^\zeta)\geq \zeta\delta.
\end{equation}

\noindent\textbf{Super function:}
Define
\[\bar{v}=\bar{v}^\tau=\frac{1}{2}\lambda \frac{s}{\sqrt{\tau}}(1-\frac{s}{\sqrt{\tau}}),\]
then
\[\chi+D^2\bar{v}=\begin{bmatrix}
  -\frac{\lambda}{\tau}&\frac{1}{\sqrt{\tau}}\nabla (u_1-u_0)\\
  \frac{1}{\sqrt{\tau}}\nabla (u_1-u_0)^T&\nabla^2\tilde{u}
\end{bmatrix},\]
and
\begin{equation*}
         {\rm tr}\chi+\Delta \bar{v} =-\frac{\lambda}{\tau}+\Delta \tilde{u},~~~  {\rm in~} [0,\sqrt{\tau}]\times \mathbb{T}^n.
\end{equation*}
Consequently, if we choose
\begin{equation}\label{Lower-estimate-Par-2}
\lambda=\bar{\lambda}^\tau =\max\{\sup_{\mathbb{T}^n}\Delta u_0,\sup_{\mathbb{T}^n}\Delta u_1\}\tau,
\end{equation}
then  we have
\[{\rm tr}\chi+\Delta \bar{v} \leq 0.\]
Since we have
\[{\rm tr}\chi+\Delta v^\zeta\geq0,\]
by the comparison principle and  $v^\zeta=\bar{v}=0$ on $\{0,\sqrt{\tau}\}\times \mathbb{T}^n$ again,
\begin{equation}\label{Lower-estimate-1}
v^\zeta\leq \bar{v} ~~~  {\rm in~} [0,\sqrt{\tau}]\times \mathbb{T}^n.
\end{equation}

\subsection{Preliminary a priori estimate}Now, combine with \eqref{Upper-estimate-1} and \eqref{Lower-estimate-1}, we have the $L_\infty$ estimate
\begin{equation}\label{L-infity-estimate-1}
-\frac{1}{8}\underline{\lambda}^\tau\leq \underline{v}\leq v^\zeta\leq \bar{v}\leq \frac{1}{8}\bar{\lambda}^\tau,
\end{equation}
here $\underline{\lambda}^\tau$ is defined in \eqref{StrSubSol-Par-1} and $\bar{\lambda}^\tau$ is defined in \eqref{Lower-estimate-Par-2}.

Consequently, for the interior normal derivative at $\{0, \sqrt{\tau}\}\times \mathbb{T}^n$,
\begin{equation}\label{Normal-derivative-estimate-low}
-\frac{1}{2\sqrt{\tau}}\underline{\lambda}^\tau\leq\frac{\partial}{\partial s}\Big|_{s=0}\underline{v}\leq\frac{\partial}{\partial s}\Big|_{s=0}v^\zeta\leq \frac{\partial}{\partial s} \bar{v}\Big|_{s=0}\leq \frac{1}{2\sqrt{\tau}}\bar{\lambda}^\tau,
\end{equation}
and
\begin{equation}\label{Normal-derivative-estimate-up}
\frac{1}{2\sqrt{\tau}}\underline{\lambda}^\tau\geq\frac{\partial}{\partial s}\Big|_{s=\sqrt{\tau}}\underline{v}\geq\frac{\partial}{\partial s}\Big|_{s=\sqrt{\tau}}v^\zeta\geq \frac{\partial}{\partial s} \bar{v}\Big|_{s=\sqrt{\tau}}\geq -\frac{1}{2\sqrt{\tau}}\bar{\lambda}^\tau.
\end{equation}
In particular,
\begin{equation}\label{Normal-derivative-estimate-1}
\sup_{\{0,1\}\times \mathbb{T}^n}|\frac{\partial}{\partial t}v^\tau|=\sup_{\{0,\sqrt{\tau}\}\times \mathbb{T}^n}|\sqrt{\tau}\frac{\partial}{\partial s}v^\tau|\leq\max\{\frac{1}{2}\underline{\lambda}^\tau,\frac{1}{2}\bar{\lambda}^\tau\}.
\end{equation}

Moreover, since $v^\zeta$ vanish identically on the boundary $\{0,\sqrt{\tau}\}\times \mathbb{T}^n$,  the tangential derivative also vanish, i.e.
\begin{equation}\label{Tangential-derivative-estimate-1}
\nabla v^\zeta\big|_{\{0,\sqrt{\tau}\}\times \mathbb{T}^n}=0.
\end{equation}

Next, we will establish the interior derivative estimate.
Differentiating equation \eqref{ConSpeLagDirProTau} in the direction of $\xi$, we obtain
\[L_{v^\zeta}D_{\xi}(\tilde{u}+v^\zeta)=D_{\xi}\varphi.\]
\noindent\textbf{ Case I}: If $\xi=\frac{\partial}{\partial s}$, then
\[\begin{split}
  D_{\xi}(\chi_{ij}+\underline{v}_{ij})&=\frac{\partial}{\partial s}\begin{bmatrix}
  -\frac{\underline{\lambda}^\tau}{\tau}&\frac{1}{\sqrt{\tau}}\nabla (u_1-u_0)\\
  \frac{1}{\sqrt{\tau}}\nabla (u_1-u_0)^T&(1-\frac{s}{\sqrt{\tau}})\nabla^2u_0+\frac{1}{\sqrt{\tau}}\nabla^2u_1
\end{bmatrix}\\
&=\begin{bmatrix}
  0&0\\
  0&\frac{1}{\sqrt{\tau}}(\nabla^2u_1-\nabla^2u_0)
\end{bmatrix};
\end{split}\]
\noindent\textbf{ Case II}: If $\xi=\frac{\partial}{\partial x^k}\in T\mathbb{T}^n$, then
\[\begin{split}
  D_{\xi}(\chi_{ij}+\underline{v}_{ij})&=\frac{\partial}{\partial x^k}\begin{bmatrix}
  -\frac{\underline{\lambda}^\tau}{\tau}&\frac{1}{\sqrt{\tau}}\nabla (u_1-u_0)\\
  \frac{1}{\sqrt{\tau}}\nabla (u_1-u_0)^T&(1-\frac{s}{\sqrt{\tau}})\nabla^2u_0+\frac{1}{\sqrt{\tau}}\nabla^2u_1
\end{bmatrix}\\
&=\begin{bmatrix}
  0&\frac{1}{\sqrt{\tau}}\nabla \frac{\partial}{\partial x^k}(u_1-u_0)\\
  \frac{1}{\sqrt{\tau}}\nabla \frac{\partial}{\partial x^k}(u_1-u_0)^T&(1-\frac{s}{\sqrt{\tau}})\nabla^2\frac{\partial}{\partial x^k}u_0+\frac{1}{\sqrt{\tau}}\nabla^2\frac{\partial}{\partial x^k}u_1
\end{bmatrix}.
\end{split}\]
In either case, from the definition of $\varphi$ in \eqref{ConSpeLagDirProConRig_1}, we have
\[\begin{split}
|D_{\xi}\varphi|&=(1-\zeta)|F^{ij}(\chi+D^2\underline{v})D_{\xi}(\chi_{ij}+\underline{v}_{ij})|\leq \frac{C_1(1-\zeta)}{\sqrt{\tau}},
\end{split}\]
here $C_1$  depends only on $u_0$, $u_1$.

If we take
\[A_1=\frac{C_1(1-\zeta)}{\zeta\delta\sqrt{\tau}},\]
so that, by \eqref{SubSol-Barrier-1}, we have
\[L_{v^\zeta}\big(A(\underline{v}-v^\zeta)\pm D_\xi (\tilde{u}+ v^\zeta)\big)\geq 0.\]
By using the maximum principle again, we conclude that the function
\[A(\underline{v}-v^\zeta)\pm D_\xi (\tilde{u}+ v^\zeta)\]
attains its maximum on the boundary, and thus
\begin{equation}|D_\xi (\tilde{u}+ v^\zeta)|\leq \sup_{[0,\sqrt{\tau}]\times \mathbb{T}^n}A_1(v^\zeta-\underline{v})+\sup_{\{0,\sqrt{\tau}\}\times \mathbb{T}^n}|D_\xi (\tilde{u}+ v^\zeta)|.
\end{equation}
Consequently, combine with \eqref{L-infity-estimate-1}, \eqref{Normal-derivative-estimate-low} and \eqref{Normal-derivative-estimate-up},
\begin{equation}\label{Interior-Tan-der-est-1}
\begin{split}
|\frac{\partial}{\partial s}v^\zeta|&\leq \sup_{[0,\sqrt{\tau}]\times \mathbb{T}^n}\Big(|\frac{\partial}{\partial s}\tilde{u}|+A_1(v^\zeta-\underline{v})\Big)+\sup_{\{0,\sqrt{\tau}\}\times \mathbb{T}^n}\Big(|\frac{\partial}{\partial s}\tilde{u}|+ |\frac{\partial}{\partial s}v^\zeta|\Big)\\
&=2\sup_{\mathbb{T}^n}\frac{|u_1-u_0|}{\sqrt{\tau}}+\frac{C_1(1-\zeta)}{\zeta\delta\sqrt{\tau}}\sup_{[0,\sqrt{\tau}]\times \mathbb{T}^n}(v^\zeta-\underline{v})+\sup_{\{0,\sqrt{\tau}\}\times \mathbb{T}^n}|\frac{\partial}{\partial s}v^\zeta|\\
&\leq \frac{C}{\sqrt{\tau}},
\end{split}
\end{equation}
here $C$ depends only on $u_0$, $u_1$ and $\zeta_0$; similarly,
\begin{equation}\label{Interior-Tan-der-est-2}
\begin{split}
|\nabla v^\zeta|&\leq \sup_{[0,\sqrt{\tau}]\times \mathbb{T}^n}\Big(|\nabla\tilde{u}|+A_1(v^\zeta-\underline{v})\Big)+\sup_{\{0,\sqrt{\tau}\}\times \mathbb{T}^n}\Big(|\nabla\tilde{u}|+ |\nabla v^\zeta|\Big)\\
&=2\max\{\sup_{\mathbb{T}^n}|\nabla u_0|,\sup_{\mathbb{T}^n}|\nabla u_1|\}+\frac{C_1(1-\zeta)}{\zeta\delta\sqrt{\tau}}\sup_{[0,\sqrt{\tau}]\times \mathbb{T}^n}(v^\zeta-\underline{v})\\
&\leq \frac{C}{\sqrt{\tau}},
\end{split}
\end{equation}
here $C$ depends only on $u_0$, $u_1$ and and $\zeta_0$. Furthermore, from \eqref{Interior-Tan-der-est-2}, for $\zeta=1$,
\begin{equation}\label{Interior-Tan-der-est-3}
\begin{split}
|\nabla v^{\tau,1}|\leq 2\max\{\sup_{\mathbb{T}^n}|\nabla u_0|,\sup_{\mathbb{T}^n}|\nabla u_1|\}.
\end{split}
\end{equation}
In summary, we have established the following first order derivative estimate:
\begin{equation}|\frac{\partial}{\partial s} v^{\tau,\zeta}|\leq \frac{C}{\sqrt{\tau}},~~~|\nabla v^{\tau,\zeta}|\leq \frac{C}{\sqrt{\tau}},\end{equation}
and also
\begin{equation}\label{Fir-Der-Est-t-1}
|\frac{\partial}{\partial t} v^{\tau,\zeta}|\leq C,~~|\nabla v^{\tau, 1}|\leq C,
\end{equation}
here $C$ depends only on $u_0$, $u_1$ and $\zeta_0$.

We next derive the second derivatives estimate. Since
\begin{equation}\label{Sec-Der-Low-Est-1}
{\rm tr}(\chi+D^2 v^\zeta)\geq 0,
\end{equation}
we only need to estimate the upper bound of the second derivatives.
Differentiating equation \eqref{ConSpeLagDirProTau} twice along the direction of $\xi$,
we have
\begin{equation}\label{Sec-Diff-Equ-1}
L_{v^\zeta}(\chi+D^2v^\zeta)(\xi,\xi)+F^{ij,kl}(\chi+D^2v^\zeta)D_{\xi}(\chi_{ij}+v^\zeta_{ij})D_{\xi}(\chi_{kl}+v^\zeta_{kl})
=D^2_{\xi\xi}\varphi.
\end{equation}
Similarly, as did in the first derivative estimate, we find that
\[D^2_{\xi\xi}(\chi+D^2\underline{v})=O(\frac{1}{\sqrt{\tau}}),\]
and thus
\begin{equation}\label{Sec-Diff-Phi-1}
|D^2_{\xi\xi}\varphi|\leq \frac{C_2(1-\zeta)}{\sqrt{\tau}},
\end{equation}
here $C_2$  depends only on $u_0$, $u_1$, not on $\tau$.

By the concavity again, from \eqref{Sec-Diff-Equ-1} and \eqref{Sec-Diff-Phi-1},
\[L_{v^\zeta}(\chi+D^2v^\zeta)(\xi,\xi)\geq -\frac{C_2(1-\zeta)}{\sqrt{\tau}}.\]
 If we take
\[A_2=\frac{C_2(1-\zeta)}{\zeta\delta\sqrt{\tau}},\]
So that, by \eqref{SubSol-Barrier-1},
\[L_{v^\zeta}\big(A_2(\underline{v}-v^\zeta)+(\chi+D^2v^\zeta)(\xi,\xi)\big)\geq 0.\]
Using the maximum principle again, we conclude that the function
\[A_2(\underline{v}-v^\zeta)+(\chi+D^2v^\zeta)(\xi,\xi)\]
attains its maximum on the boundary. Consequently,
\begin{equation}\label{Int-Sec-Der-Est-1}
(\chi+D^2v^\zeta)(\xi,\xi)\leq \sup_{[0,\sqrt{\tau}]\times \mathbb{T}^n}A_2(v^\zeta-\underline{v})+\sup_{\{0,\sqrt{\tau}\}\times \mathbb{T}^n}(\chi+D^2v^\zeta)(\xi,\xi).
\end{equation}

Now we have to estimate the second derivatives at the boundary point. Note that $v^\zeta$ vanish identically on the boundary $\{0,\sqrt{\tau}\}\times \mathbb{T}^n$,
therefore the double tangential derivative in the space direction are also vanish identically on the boundary, i.e.
\begin{equation}\label{Tan-Tan-Bou-1}
\nabla^2 v^\zeta\big|_{\{0,\sqrt{\tau}\}\times \mathbb{T}^n}=0.
\end{equation}

Next we estimate the mixed second derivative on the boundary.
By differential equation \eqref{ConSpeLagDirProTau} along the  space direction $\frac{\partial}{\partial x^k}\in T\mathbb{T}^n$, we obtain
\[L_{v^\zeta}\nabla_{\frac{\partial}{\partial x^k}}(\tilde{u}+v^\zeta)=\nabla_{\frac{\partial}{\partial x^k}}\varphi.\]
Thus we have
\begin{equation*}
|L_{v^\zeta}\nabla_{\frac{\partial}{\partial x^k}}v^\zeta|=|\nabla_{\frac{\partial}{\partial x^k}}\varphi-L_{v^\zeta}\nabla_{\frac{\partial}{\partial x^k}}\tilde{u}|\leq \frac{C_3}{\sqrt{\tau}},
\end{equation*}
where $C_3$   depends only on $u_0$, $u_1$, not on $\tau$.
If we take
\[A_3=\frac{C_3}{\zeta\delta\sqrt{\tau}},\]
by \eqref{SubSol-Barrier-1}, it follows that
\[L_{v^\zeta}\big(A_3(\underline{v}-v^\zeta)\pm \nabla_{\frac{\partial}{\partial x^k}} v^\zeta\big)\geq 0.\]
Moreover,
\[\big(A_3(\underline{v}-v^\zeta)\pm \nabla_{\frac{\partial}{\partial x^k}}v^\zeta\big)\Big|_{\{0,\sqrt{\tau}\}\times \mathbb{T}^n}=0,\]
the maximum principle implies that
\[A(\underline{v}-v^\zeta)\pm \nabla_{\frac{\partial}{\partial x^k}}v^\zeta\leq 0,~~~\hbox{in}~ [0,\sqrt{\tau}]\times \mathbb{T}^n.\]
Consequently,
\[\frac{\partial}{\partial s}\Big|_{s=0}A(\underline{v}-v^\zeta)\leq\frac{\partial}{\partial s}\Big|_{s=0}\nabla_{\frac{\partial}{\partial x^k}}v^\zeta\leq \frac{\partial}{\partial s}\Big|_{s=0} A(v^\zeta-\underline{v}),\]
\[\frac{\partial}{\partial s}\Big|_{s=\sqrt{\tau}}A(\underline{v}-v^\zeta)\geq\frac{\partial}{\partial s}\Big|_{s=\sqrt{\tau}}\nabla_{\frac{\partial}{\partial x^k}}v^\zeta\geq \frac{\partial}{\partial s}\Big|_{s=\sqrt{\tau}} A(v^\zeta-\underline{v}).\]
From \eqref{Interior-Tan-der-est-1} and the definition of $\underline{v}$ in \eqref{Adm-Sub-Sol-def}, we conclude that
\begin{equation}\label{Tan-Nor-Bou-1}
|\frac{\partial}{\partial s}\nabla v^\zeta|\leq \frac{C}{\tau},
\end{equation}
here $C$ depends on only on $u_0$, $u_1$ and and $\zeta_0$.

Finally, we will establish the a priori estimates of the upper bound of double normal derivative on the boundary. Let us choose $K_2=K_2(u_0,u_1,\tau)$ large, to determined later, if
\[\sup_{\{0,\sqrt{\tau}\}\times \mathbb{T}^n}\frac{\partial^2}{\partial s^2}v^\zeta\leq K_2,\]
then we are done; Otherwise, at some pint $p_0\in \{0,\sqrt{\tau}\}\times \mathbb{T}^n$,
\[\frac{\partial^2}{\partial s^2}v^\zeta(p_0)>K_2.\]
We may then apply the part (1) in lemma \ref{AsyBehEigLem2} and conclude that
\begin{equation}\label{BouSecDerEst-1}
  \varphi=f(\chi+D^2v^\zeta)\sim \arctan \frac{\partial^2}{\partial s^2}v^\zeta+\sum_{i=1}^n\arctan\lambda_i(\nabla^2\tilde{u})\quad {\rm at}~p_0.
\end{equation}
On the other hand, from \eqref{StrSubSol1} and \eqref{ConSpeLagDirProConRig_1}, we have
\begin{equation}\label{BouSecDerEst-2}
\begin{split}
\varphi&=(1-\zeta)\Theta(\chi+D^2\underline{v})+\zeta\Theta\\
&\sim(1-\zeta)\big(\arctan \frac{\underline{\lambda}^\tau}{\tau}+\sum_{i=1}^n\arctan\lambda_i(\nabla^2\tilde{u})\big)+\zeta\Theta.
\end{split}
\end{equation}
Combine with \eqref{BouSecDerEst-1} and \eqref{BouSecDerEst-2},
\[\begin{split}
\arctan \frac{\partial^2}{\partial s^2}v^\zeta(p_0)&\sim(1-\zeta)\arctan \frac{\underline{\lambda}^\tau}{\tau}+\zeta\big(\Theta-\sum_{i=1}^n\arctan\lambda_i(\nabla^2\tilde{u})\big)\\
&\leq (1-\zeta)\arctan \frac{\underline{\lambda}^\tau}{\tau}+\zeta (\frac{\pi}{2}-2\delta)\\
&\leq \max\{\arctan \frac{\underline{\lambda}^\tau}{\tau},\frac{\pi}{2}-2\delta\}.
\end{split}\]
We now take $K_2=K_2(u_0,u_1,\tau)$ large enough, such that
\[\arctan \frac{\partial^2}{\partial s^2}v^\zeta(p_0)\leq \max\{\arctan \frac{\underline{\lambda}^\tau}{\tau}+1,\frac{\pi}{2}-\delta\}.\]
Consequently,
\begin{equation}\label{Double-Nor-Bou-1}
  \frac{\partial^2}{\partial s^2}u^\zeta(p_0)\leq \max\{\frac{\underline{\lambda}^\tau}{\tau}+1,\frac{1}{\tan \delta}, K_2\}.
\end{equation}
From \eqref{Sec-Der-Low-Est-1}, \eqref{Int-Sec-Der-Est-1}, \eqref{Tan-Tan-Bou-1}, \eqref{Tan-Nor-Bou-1} and \eqref{Double-Nor-Bou-1}, the second derivatives are dominated by some constant $C$, under control.

In summery, we have
 \begin{equation}\label{ConUniC2Est1}
  |v^\zeta|_{C^2([0,\sqrt{\tau}]\times \mathbb{T}^n)}\leq C,
  \end{equation}
where $C$ depends on $u_0$, $u_1$, and  $\tau$,  but not on $\zeta$. Consequently, the equation \eqref{ConSpeLagDirProTau} is uniformly elliptic; moreover, $\varphi^\tau\geq \Theta\geq \frac{n-1}{2}\pi$, then the partial differential operator of the special Lagrangian equation is concave at the admissible function,  and the Evans-Krylov theorem implies the H\"{o}lder continuity of the second order derivatives. Then the standard elliptic bootstrapping argument using Schauder theory imples that the solution in fact smooth. The usual compactness argument shows that $E$ is closed and we conclude that $E=[0,1]$. In particular, $\zeta=1\in E$ as desired which proves Theorem \ref{SLE-Cyl-Exi-Thm}.

\section{Existence of weak Lagragian geodesic}
In this section, with Theorem \ref{SLE-Cyl-Exi-Thm}, we will prove Theorem \ref{LGE-Exi-Thm}.

Let $\Lambda_i=(x,\nabla u_i(x))$ , $i=0, 1$, be two Lagrangian graph over $\mathbb{T}^n$, which satisfy $[du_0]=[du_1]\in H^1(\mathbb{T}^n)$. Assume the Lagrangian phase satisfy \begin{equation}\label{Critical-Lag-Pha-1}
\vartheta(\Lambda_i)=\sum_{i=1}^n\arctan\lambda_i(\nabla ^2u_i)>\frac{n-1}{2}\pi,~~~i=0,1,
\end{equation}
then there exists $\theta\in (-\pi,\pi]$, such that
\begin{equation}\label{Pos-Lag-Con-1}
  \vartheta(\Lambda_i)\in (\theta-\frac{\pi}{2},\theta+\frac{\pi}{2}),\quad {\rm mod}~2\pi.
\end{equation}
Note that \eqref{Pos-Lag-Con-1} is equivalent to  ${\rm Re}(e^{-\sqrt{-1}\theta}dz_1\wedge\cdots\wedge dz_n)|_{\Lambda_i}>0$, namely, $\Lambda_i\in \mathcal{L}^+(\mathbb{C}^n,\mathbb{T}^n), i=0, 1$, are positive Lagrangians.
In fact, one can take
\begin{equation}\label{Pre-Lag-Phase-1}\theta=\left\{
           \begin{array}{ll}
             0, & \hbox{$n=4l$;} \\
             \frac{1}{2}\pi, & \hbox{$n=4l+1$;} \\
             \pi, & \hbox{$n=4l+2$;} \\
             -\frac{1}{2}\pi, & \hbox{$n=4l+3$,}
           \end{array}
         \right.
\end{equation}
such that \eqref{Pos-Lag-Con-1} holds.

Now we will construct a solution for the Lagrangian geodesic equation,
 \begin{equation}\label{LagGeoEqu_11}
{\rm Im}\Big(e^{-\sqrt{-1}\theta}\det\begin{bmatrix}
 \sqrt{-1}\frac{\partial^2u}{\partial t^2}&\sqrt{-1}\nabla\frac{\partial u}{\partial t}\\ (\sqrt{-1}\nabla\frac{\partial u}{\partial t})^T&I+\sqrt{-1}\nabla^2u
\end{bmatrix}\Big)=0.
\end{equation}
As explained in the introduction section, we try to approximate  the equation \eqref{LagGeoEqu_11} by a family of elliptic equation with a parameter $\tau$,
\begin{equation}\label{LagGeoEquApp_11}
{\rm Im}\Big(e^{-\sqrt{-1}\theta}\det\begin{bmatrix}
 \tau+\sqrt{-1}\frac{\partial^2u}{\partial t^2}&\sqrt{-1}\nabla\frac{\partial u}{\partial t}\\ (\sqrt{-1}\nabla\frac{\partial u}{\partial t})^T&I+\sqrt{-1}\nabla^2u
\end{bmatrix}\Big)=0.
\end{equation}
As $\tau\rightarrow 0$, we recover the Lagranigan geodesic equation \eqref{LagGeoEqu_11} from \eqref{LagGeoEquApp_11}.
Now let us reparametrize the path $u(t,x)$ by scaling the time variable, namely, by introducing the new variable
\[s=\sqrt{\tau}t\in [0,\sqrt{\tau}],\]
then \eqref{LagGeoEquApp_11} can be rewritten as
\begin{equation}\label{LagGeoEquApp_13}
{\rm Im}\Big(e^{-\sqrt{-1}\theta}\det\Big(\begin{bmatrix}
 \sqrt{\tau}&0\\
  0&I
\end{bmatrix}\begin{bmatrix}
  1+\sqrt{-1}\frac{\partial^2}{\partial s^2}u &\sqrt{-1} \nabla\frac{\partial}{\partial s}u\\ (\sqrt{-1} \nabla\frac{\partial}{\partial s}u)^T&I+\sqrt{-1}\nabla^2u
\end{bmatrix}
\begin{bmatrix}
 \sqrt{\tau}&0\\
  0&I
\end{bmatrix}\Big)\Big)=0.
\end{equation}
For $\tau>0$, then \eqref{LagGeoEquApp_13} is equivalent to
\begin{equation}\label{SpeLagGeoEquApp_11}
{\rm Im}\big(e^{-\sqrt{-1}\theta}\det(I+\sqrt{-1}D^2u)\big)=0,
\end{equation}
In particular, the parameter $\tau$ disappear, and we get the special Lagrangian equation, which can be rewritten as a Hessian-type equation:
\begin{equation}\label{SpeLagGeoEquApp_12}
\sum_{i=1}^{n+1}\arctan\lambda_i(D^2u)=k\pi+\theta,\quad {\rm for~some~}~k\in\mathbb{Z}.
\end{equation}
For the $\theta$ defined in \eqref{Pre-Lag-Phase-1}, there exists $k=\frac{n}{2}-\frac{\theta}{\pi}\in \mathbb{Z}$ such that
\begin{equation}\label{SpeLagGeoEqu-Cri-1}
\Theta:=k\pi+\theta=\frac{n}{2}\pi.
\end{equation}
With assumption \eqref{Critical-Lag-Pha-1}, then \[\vartheta(\Lambda_i)>\Theta-\frac{1}{2}\pi, ~~~ i=0, 1,\]
namely, the Lagrangian phase of $u_0$, $u_1$ satisfy  the condition \eqref{Adm-Bou-Con-1} in Theorem \ref{SLE-Cyl-Exi-Thm} automatically.

Consequently, for any $\tau\in (0,1]$, with Theorem \ref{SLE-Cyl-Exi-Thm}, we can construct a smooth solution $u^\tau:=\tilde{u}^\tau+v^{\tau}$ for special Lagrangian equation \eqref{SpeLagGeoEquApp_12}  on the cylinder $[0,\sqrt{\tau}]\times \mathbb{T}^n$. In this case, the right hand of the special Lagrangian equation is  a constant $\Theta=\frac{n}{2}\pi$, then the maximum of first derivative achieved on the boundary. More precisely, from \eqref{Fir-Der-Est-t-1}, i.e. in the case of $\zeta=1$ in the continuous path for the proof of Theorem \ref{SLE-Cyl-Exi-Thm}, we have the uniform estimate for $\hat{v}^\tau(t,x):=v^\tau(\sqrt{\tau}t,x), t\in [0,1],$
\[|\hat{v}^\tau(t,x)|_{C^{1}([0,1]\times \mathbb{T}^n)}\leq C,\]
where $C$ depends only on $u_0$ and $u_1$, not on the parameter $\tau$. For some sequence $\tau_j\rightarrow 0$, if
\[v=\lim_{j\rightarrow \infty} \hat{v}^{\tau_j},\]
then
\[|v|_{C^{1}([0,1]\times \mathbb{T}^n)}\leq C.\]
Finally,  due to the divergence free structure of $\sigma_k$, one can extend the operator
 \begin{equation}\label{SpeLagGeoOpe11}
{\rm Im}\Big(e^{-\sqrt{-1}\theta}\det\begin{bmatrix}
 \sqrt{-1}\frac{\partial^2u}{\partial t^2}&\sqrt{-1}\nabla\frac{\partial u}{\partial t}\\ (\sqrt{-1}\nabla\frac{\partial u}{\partial t})^T&I+\sqrt{-1}\nabla^2u
\end{bmatrix}\Big)
\end{equation}
 to continuous convex functions, and thus we can say $u:=\tilde{u}^1+v$ is a Lipschitz continuous solution of the  Lagrangian geodesic equation \eqref{LagGeoEqu_11}.

Following \cite{TW97}, a function $u\in C^2(\Omega), \Omega \subset \mathbb{R}^n$, is called $k$-convex if $\sigma_k(\nabla^2u)>0$ for $j=1,\cdots, k$. A function $u\in C^0(\Omega)$ is called $k$-convex, if there exists a sequence of $k$-convex functions $u_j\in C^2(\Omega)$ such that in any subdomain
converges uniformly to $u$.
\begin{theorem}\label{Hessian-Measure-1}\cite[Theorem 1.1]{TW97}
For any $k$-convex function $u\in C^0(\Omega)$, there exists a Radon measure $\mu_k[u]$ such that
\begin{enumerate}
  \item if $u\in C^2$, then
  \[\mu_k[u]=\sigma_k(D^2u)dx;\]
  \item if $\{u_j\}\subset C^2$ is a sequence of $k$-convex functions which converges to $u$, then $\mu_k[u_j]\rightarrow \mu_k(u)$ weakly as measure, that is
\[\int_\Omega gd\mu_k[u_j]\rightarrow \int_\Omega gd\mu_k[u],\]
for all $g\in C^0(\Omega)$ with compact support.
\end{enumerate}
\end{theorem}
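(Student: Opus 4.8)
The plan is to obtain the Hessian measure $\mu_k[u]$ by establishing the \emph{weak continuity} of the $k$-Hessian operator $u\mapsto\sigma_k(D^2u)$ along locally uniformly convergent sequences of $k$-convex functions, and then to \emph{define} $\mu_k[u]$ as the resulting weak limit. The entire argument rests on two ingredients: the divergence structure of the elementary symmetric functions, and a uniform local integral a priori estimate. First I would record the divergence structure. Writing $T_{k-1}^{ij}=\partial\sigma_k/\partial u_{ij}$ for the $(k-1)$-th Newton tensor, the commutativity of third derivatives yields the identities $T_{k-1}^{ij}u_{ij}=k\,\sigma_k(D^2u)$ and $\sum_i D_iT_{k-1}^{ij}=0$, so that $\sigma_k(D^2u)=\tfrac1k D_i(T_{k-1}^{ij}D_ju)$. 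The key structural fact is that the set of $k$-convex Hessians is the G\aa rding cone $\Gamma_k$, which is a \emph{convex} cone on which $T_{k-1}^{ij}$ is positive definite; this keeps all quantities below nonnegative and lets me control the entries of $T_{k-1}$ by its trace $(n-k+1)\sigma_{k-1}$.

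Next I would prove the a priori local integral estimates: for $k$-convex $u\in C^2(\Omega)$ and $\Omega'\subset\subset\Omega$, the masses $\int_{\Omega'}\sigma_j(D^2u)\,dx$ for $j\le k$ are bounded by a constant depending only on $n,k,\Omega',\Omega$ and $\operatorname{osc}_\Omega u$. These follow from the divergence form by testing against cutoff functions and invoking the maximum principle together with the positivity of the Newton tensor on $\Gamma_k$. As a consequence, for $k$-convex $u_j\to u$ locally uniformly, the nonnegative measures $\sigma_k(D^2u_j)\,dx$ have uniformly bounded mass on compact subsets, and hence are weak-$*$ precompact by the Riesz representation theorem.

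The heart of the matter is the weak continuity itself. Given smooth $k$-convex $u,w$ that are $C^0$-close, I would write, using the fundamental theorem of calculus along the segment $u_t=(1-t)w+tu$ (which remains $k$-convex by convexity of $\Gamma_k$),
\[
\sigma_k(D^2u)-\sigma_k(D^2w)=\int_0^1 T_{k-1}^{ij}(D^2u_t)\,D_{ij}(u-w)\,dt.
\]
Pairing with a test function $\eta$ and integrating by parts \emph{twice}, discarding each time the vanishing divergence $\sum_i D_iT_{k-1}^{ij}=0$, I arrive at
\[
\int_\Omega\eta\big(\sigma_k(D^2u)-\sigma_k(D^2w)\big)\,dx=\int_0^1\int_\Omega D_{ij}\eta\;T_{k-1}^{ij}(D^2u_t)\,(u-w)\,dx\,dt,
\]
in which every second derivative of the unknowns has been transferred onto $\eta$. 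Bounding the right-hand side by $\|D^2\eta\|_\infty\,\|u-w\|_{C^0(\operatorname{supp}\eta)}$ times the uniform bound on $\int|T_{k-1}|$ (controlled by the mass of $\sigma_{k-1}$ from the previous step) shows the difference tends to $0$ as $u,w$ approach a common $C^0$ limit.

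Applying this to an approximating sequence shows that the weak limit of $\sigma_k(D^2u_j)\,dx$ exists and is independent of the chosen $k$-convex approximation; I then define $\mu_k[u]$ to be that limit. Property (1) is recovered by taking the constant sequence $u_j\equiv u$, and property (2) is precisely the weak-continuity statement just established. I expect the main obstacle to be the a priori local integral estimate, namely bounding the total mass of $\sigma_j(D^2u)$ uniformly in terms of $\operatorname{osc}u$ alone: the double integration by parts and the weak-continuity bookkeeping are essentially forced by the divergence structure, whereas the integral bound genuinely requires the maximum-principle machinery for $k$-convex functions together with the positivity of the Newton tensor on the cone $\Gamma_k$.
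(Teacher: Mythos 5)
The paper contains no proof of this statement: it is quoted verbatim from Trudinger--Wang \cite{TW97} and used as a black box, so there is no internal argument to compare your attempt against. Your proposal is correct and is essentially a reconstruction of Trudinger--Wang's own proof --- the divergence-free Newton tensor $T_{k-1}^{ij}$, the local integral estimate bounding $\int_{\Omega'}\sigma_j(D^2u)\,dx$ ($j\le k$) in terms of the oscillation of $u$, and the key Cauchy estimate obtained by differentiating $\sigma_k$ along the segment $u_t=(1-t)w+tu$ and integrating by parts twice so that all derivatives fall on the test function --- and you correctly identify the local integral estimate (rather than the bookkeeping) as the genuinely technical ingredient.
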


In our case, since \[\sum_{i=1}^{n+1}\arctan\lambda_i(D^2u^\tau)=\Theta= \frac{n}{2}\pi,\] we conclude that $D^2u^\tau>0$; therefore, $\hat{u}^\tau(t,x):=u^\tau(\sqrt{\tau}t,x)$ is convex in $[0,1]\times \mathbb{T}^n$. Consequently, with Theorem \ref{Hessian-Measure-1}, for any $1\leq k \leq n+1$,  the $k$-Hessian measure
\[\mu_k[\hat{u}^{\tau_j}]\rightarrow \mu_k[\hat{u}] ~~~{\rm in}~~[0,1]\times \mathbb{T}^n;\]
and also
for any $1\leq k \leq n$, then
\[\mu_k[\hat{u}^{\tau_j}(t,-)]\rightarrow \mu_k(\hat{u}(t,-)) ~~~{\rm in}~~\mathbb{T}^n.\]
Note that the operator \eqref{SpeLagGeoOpe11} can be rewritten as a linear combination of $\sigma_k$, namely,
\begin{equation*}
\begin{split}
{\rm Im}\Big(e^{-\sqrt{-1}\theta}&\det\begin{bmatrix}
 \tau+\sqrt{-1}\frac{\partial^2u}{\partial t^2}&\sqrt{-1}\nabla\frac{\partial u}{\partial t}\\ (\sqrt{-1}\nabla\frac{\partial u}{\partial t})^T&I+\sqrt{-1}\nabla^2u
\end{bmatrix}\Big)\\
&={\rm Im}\Big(e^{-\sqrt{-1}\theta}\big(\det(I_{n+1}+\sqrt{-1}D^2_{t,x}u)-(1-\tau)\det(I+\sqrt{-1}\nabla^2u)\big)\Big)\\
&={\rm Im}\Big(e^{-\sqrt{-1}\theta}\big(\sum_{k=0}^{n+1}(\sqrt{-1})^k\sigma_k(D^2_{t,x}u)-(1-\tau)\sum_{k=0}^{n}(\sqrt{-1})^k\sigma_k(\nabla^2u)\big)\Big),
\end{split}
\end{equation*}
thus the operator \eqref{SpeLagGeoOpe11} can be extend to   continuous convex functions;
we can take the limit,
\[u(t,x):=\tilde{u}^1(t,x)+\lim_{\tau_j\rightarrow 0}\hat{v}^{\tau_j}(t,x)\] will be Lipschitz continuous solution of the  Lagrangian geodesic equation \eqref{LagGeoEqu_11} in the weak sense. We finished the proof of Theorem \ref{LGE-Exi-Thm}.


\end{document}